\pgfplotsset{compat=1.18}
\newtheorem{theorem}{Theorem}
\newtheorem{claim}[theorem]{Claim}
\newtheorem{definition}[theorem]{Definition}
\newtheorem{observation}[theorem]{Observation}
\newtheorem{proposition}[theorem]{Proposition}
\newcommand\conv{\ensuremath{\mathrm{conv}}}
\title{Erd\H{o}s-Szekeres Maker-Breaker Games\thanks{A preliminary version of this paper appeared in the \emph{Proceedings of 31st International Computing and Combinatorics Conference (Chengdu, 2025), LNCS 15983, Springer, pp.\ 248--261.}}}
\author{Aleksa D\v{z}uklevski\footnote{Department of Applied Mathematics, Faculty of Mathematics and Physics, Charles University, Czech Republic, and Department of Mathematics and Informatics, Faculty of Sciences, University of Novi Sad, Serbia Email: \texttt{aleksa@matfyz.cuni.cz}. Supported by GA\v{C}R grant 23-04949X and GAUK grant VV–2025–260822. } \and Dömötör P\'{a}lvölgyi\footnote{ELTE Eötvös Loránd University and HUN-REN Alfréd Rényi Institute of Mathematics, Budapest, Hungary. Email: \texttt{domotor.palvolgyi@ttk.elte.hu}. Supported by the NRDI EXCELLENCE-24 grant no.~151504 Combinatorics and Geometry and by the ERC Advanced Grant no.~101054936 ERMiD.} \and Alexey Pokrovskiy\footnote{Department of Mathematics, University College London, UK. Email: \texttt{dralexeypokrovskiy@gmail.com.}} \and Csaba D. T\'{o}th\footnote{Department of Mathematics, California State University Northridge, Los Angeles, CA, and Department of Computer Science, Tufts University, Medford, MA, USA. Email: \texttt{csaba.toth@csun.edu}. Research supported in part by the NSF award DMS-2154347.} \and Tom\'{a}\v{s} Valla\footnote{Faculty of Information Technology, Czech Technical University in Prague, Czech Republic. Email: \texttt{tomas.valla@fit.cvut.cz}. Research supported by the Czech Science Foundation Grant no. 24-12046S.} \and Lander Verlinde\footnote{School of Computer Science, University of Auckland, New Zealand. Email: \texttt{lander.verlinde@pg.canterbury.ac.nz}. Research supported by the New Zealand Marsden Fund.}}
\date{}
\begin{document}

\definecolor{ududff}{rgb}{0.30196078431372547,0.30196078431372547,1}
\definecolor{qqttcc}{rgb}{0,0.2,0.8}
\definecolor{ccqqqq}{rgb}{0.8,0,0}
\definecolor{qqzzqq}{rgb}{0,0.6,0}
\definecolor{ttzzqq}{rgb}{0.2,0.6,0}
\definecolor{ffwwqq}{rgb}{1,0.4,0}
\definecolor{qqqqff}{rgb}{0,0,1}
\definecolor{qqwuqq}{rgb}{0,0.39215686274509803,0}
\definecolor{uuuuuu}{rgb}{0.26666666666666666,0.26666666666666666,0.26666666666666666}
\definecolor{ttffqq}{HTML}{0a69aa}
\definecolor{ffqqqq}{HTML}{d6bf0f}
\definecolor{zzttqq}{rgb}{0.96,0.26,0.91}
\definecolor{ffzzcc}{rgb}{1,0.6,0.8}
\definecolor{ccffww}{rgb}{0.8,1,0.4}
\definecolor{zzccff}{rgb}{0.6,0.8,1}
\definecolor{yqqqyq}{rgb}{0.5,0,0.5}
\definecolor{ccffww}{rgb}{0.8,1,0.4}
\definecolor{zzffff}{rgb}{0.6,1,1}
\definecolor{xdxdff}{rgb}{0.5,0.5,1}
\definecolor{qqffqq}{rgb}{0,1,0}
\definecolor{ffqqqq}{rgb}{1,0,0}

\maketitle
\begin{abstract}
    We present
    new results on
    Maker-Breaker games arising from the Erd\H{o}s-Szekeres problem in planar geometry. This classical problem asks how large a set in general position has to be to ensure the existence of $n$ points that are the vertices of a convex $n$-gon. Moreover, Erd\H{o}s further extended this problem by asking what happens if we also require that this $n$-gon has an empty interior.
    In a 2-player Maker-Breaker setting, this problem inspires two main games. In both games, Maker tries to obtain an empty convex $k$-gon, while Breaker tries to prevent her from doing so. The games differ only in which points can comprise the winning $k$-gons: in the monochromatic version the points of both players can make up a $k$-gon, while in the bichromatic version only Maker's points contribute to such a polygon. Both settings are studied in this paper. We show that in the monochromatic game, Maker always wins. Even in a biased game where Breaker is allowed to place $s$ points per round, for any constant $s \geq 1$, Maker has a winning strategy. In the bichromatic setting, Maker still wins whenever Breaker is allowed to place $s$ points per round for any constant $s<2$. This settles an open problem posed by Aichholzer et al.~(2019).
    Furthermore, we show that there are games that are not a lost cause for Breaker. Whenever $k\ge 8$ and Breaker is allowed to play 12 or more points per round, she  has a winning strategy. We also consider the one-round bichromatic game (a.k.a.\ the offline version). In this setting, we show that Breaker wins if she can place twice as many points as Maker but if the bias is less than $2$, then Maker wins for large enough set of points.\\

    \noindent\textbf{Keywords.} Erd\H{o}s-Szekeres theorem
maker-breaker game, convex $k$-hole.
\end{abstract}

\section{Introduction}

The Erd\H{o}s-Szekeres theorem~\cite{ESZ35} is a classical result in Ramsey theory. It states that every set of $n$ points in the plane in general position contains a subset of $\Omega(\log n)$ points in convex position. A set of points in the plane is in \emph{general position} if no three points are collinear, and they are in \emph{convex position} if they are the vertices of a convex polygon. The bound $\Omega(\log n)$ is the best possible; it has been refined to $(1-o(1))\log_2 n$ and significant efforts have been devoted to finding precise bounds~\cite{HMPT20,Suk17}. The Erd\H{o}s-Szekeres theorem had lasting impact on combinatorial geometry---many generalizations and variations have been studied.

For a point set $S\subset \mathbb{R}^2$ and an integer $k\geq 3$, a \emph{$k$-hole} is a subset $H\subset S$ of size $k$ such that $\conv(H)$ is a convex $k$-gon and $\conv(H)\cap S=H$, where $\conv(H)$ denotes the convex hull of $H$. Let $h(k)$ denote the minimum integer $n$ such that every set of $n$ points in the plane in general position contains a $k$-hole. Horton~\cite{Horton1983} constructed point sets in general position that do not contain any 7-hole, which implies $h(k)=\infty$ for all $k\geq 7$. It is easy to see that $h(3)=3$, $h(4)=5$, and Harboth~\cite{GDZPPN002079801} showed that $h(5)=10$. Gerken~\cite{Gerken08} and Nicol\'as~\cite{Nicolas07} proved independently that $h(6)<\infty$. With a computer assisted proof, Heule and Scheucher~\cite{HS24} recently showed that $h(6)=30$; completely answering a question posed by Erd\H{o}s~\cite{Erd78}.
The minimum and maximum number of $k$-holes among $n$ points in the plane were also studied~\cite{AichholzerBHKPS20,AichholzerMGHHH15,BalkoSV23,BV04,PinchasiRS06},
as well as the associated counting and enumeration problems~\cite{Bae22,DobkinEO90,MitchellRSW95}.

Competitive games between two players
to achieve a geometric structure were studied previously~\cite{abdhkkrsu-gt-05,hefetz2014positional}.
The natural competitive game arising from the Erd\H{o}s-Szekeres problem is the
endeavor of two players to achieve a $k$-hole for a given positive integer $k$
by alternately placing points in general position in the plane.
Depending on the goal of the game, three variants of the two-player game spawn
from the Erd\H{o}s-Szekeres problem.

\begin{enumerate}
    \item Both players want to obtain a $k$-hole (\emph{Maker-Maker}).
          Whoever creates the first $k$-hole wins.
    \item Both players want to avoid a $k$-hole (\emph{Avoider-Avoider}).
          Whoever creates the first $k$-hole loses.
    \item The first player wants to obtain a $k$-hole and the second wants to prevent this (\emph{Maker-Breaker}). The first player (Maker) wins if a $k$-hole is ever created by either player, and the second player (Breaker) wins if she is able to prolong the game indefinitely.
\end{enumerate}

The \emph{Erd\H{o}s-Szekeres Maker-Maker game} was introduced by Valla~\cite{Valla06}. Kolipaka and Govindarajan~\cite{KolipakaG13} studied the Avoider-Avoider game
and showed that for $k=5$ the second player can win within 9 rounds.
Aichholzer et al.~\cite{ADH+19}
proposed the Maker-Breaker game for $k\geq 7$. For $k\leq 6$, Maker inevitably wins as every sufficiently large point set in general position contains a $k$-hole. For $k=7$, Maker can easily add one more point to extend a 6-hole to a 7-hole. Das and Valla~\cite{DV26} recently studied the Maker-Breaker game: they showed that Maker (i.e., the first player) has a winning strategy for all $k\leq 8$. If $r(k)$ denotes the minimum number of rounds for a winning strategy for Maker, then $r(3)=2$ and $r(4)=3$ are obvious as Maker places the $(2r-1)$st point in round $r$; and they proved that $r(5)\leq 6$, $r(7)\leq 8$, and $r(8)\leq 12$. They asked what the maximum $k$ is for which Maker has a winning strategy.

Aichholzer et al.~\cite{ADH+19}
also introduced several \emph{bichromatic} versions of Erd\H{o}s-Szekeres games. In the bichromatic Maker-Breaker game, the set $S\subset \mathbb{R}^2$ is partitioned as $S=M\cup B$, where $M$ and $B$ denote the points played by Maker and Breaker, respectively. Maker wins if $M\cup B$ contains a $k$-hole $H$ \emph{and} $H\subset M$ (that is, the hole consists of Maker's points, and $\conv(H)$ contains neither Breaker points nor any additional Maker points).

The one-round game (a.k.a.\ the \emph{offline version}) has also been considered.
Conlon and Lim~\cite{ConlonL23} proved that for every set $M$ of $m$ points in the plane in general position, there exists a set $S=M\cup B$ without 9-holes; however the set $B$ is much larger than $M$ in their construction. In the bichromatic variant, Maker places a set $M$ of $m$ points in the plane in general position, and then Breaker places a set $B$ of points to prevent a $k$-hole $H\subset M$ with respect to $S=M\cup B$. For $k\in \{3,4\}$, $|B|=2m-O(1)$ points are always sufficient and sometimes necessary~\cite{CzyzowiczKU00,SU07}; the current best lower bound is $|B|\geq \frac29m-O(1)$ for $k=5$~\cite{CanoOHSTU15}.

By default, each player places one new point per round in the online version. However, to balance the fortunes of the players, one might introduce \emph{bias} by allowing one of the players to place more points than the other. The number of points that a player is allowed to place shall also be referred to as \emph{speed}. We shall denote the respective speeds of Maker and Breaker by $s_M:s_B$, normalized to $s_M=1$ or $s_B=1$. In particular, the default version, where the players have equal speed, is the $1:1$ game.
If Maker or Breaker have noninteger speeds $s_M:s_B$, it is conventional to order the elements of the multiset $\{i \cdot s_M:i\in \mathbb N\}\cup\{i \cdot s_B:i\in \mathbb N\}$ in increasing order (breaking ties in favor of Maker), and use this order to determine whose turn it is (see e.g.~\cite[Section 4]{kutz2005angel}).

\paragraph{Our results.}
We present three new strategies that can be used for different Maker-Breaker games. The first one is based on maintaining a structure which we shall call a \emph{$k$-strip}: Intuitively, it can be seen as a set of $k$ points on an $x$-monotone concave arc such that the region below the arc is empty  (\Cref{sec:monochromatic}).
We show that in the monochromatic $1:1$ game, Maker is able to construct such a $k$-strip of arbitrary size and describe an algorithm to do so. Since such a $k$-strip is a specific type of $k$-hole, this proves that Maker always wins the monochromatic game (\Cref{thm:strips}), settling the question of Das and Valla \cite{DV24}.
The main advantage of these strips is that they provide the maker with a lot of `space' to complete a $k$-hole. The strips can be constructed parallel to each other, so that neighboring strips require multiple points to be blocked by Breaker, while Maker can use any one of them to win the game. Using these $k$-strips does mean that Maker needs to place a lot of points to construct all the neighboring strips, which raises follow-up questions regarding efficiency of the strategy.
Furthermore, we show that even if bias is introduced in favor of Breaker, Maker can still construct $k$-strips. Hence, Maker wins the monochromatic game even with speed $1:s_B$ for any constant $s_B \geq 1$ (\Cref{thm:MB}). A last adaption of the algorithm also shows that Maker has a winning strategy for the bichromatic game with bias $1:s_B$ for any constant $s_B < 2$ (\Cref{thm:s2}). Hence, this strategy is versatile, and with the right adjustments, Maker is able to use it to win multiple types of Maker-Breaker games.

Another strategy is based on \emph{perturbed polygons} (\Cref{subsec:Breaker}). This is a strategy designed for Breaker in the bichromatic game with large enough bias in her favor. In this strategy, Breaker places her points as vertices of a slightly perturbed regular polygon inscribed in sufficiently small circles around Maker's points. Intuitively, one can think of this strategy of using the bias to `fence in' Maker's points and hence not allow a $k$-hole to be constructed. The strategy relies on surrounding every new point Maker plays, and possibly another nearby point. Ideally, these surrounding points are placed in a regular polygon, making the analysis easier. However, this sometimes would lead to three points being collinear (contrary to the general position requirement), and thus the points need to be perturbed. Hence, this strategy relies on a suitable choice of radii to box in Maker's points, and a specific type of perturbation. Both elements of the strategy are explained in \Cref{subsec:Breaker}. We prove that whenever $k \geq 8$, Breaker is able to prevent Maker from constructing a $k$-hole in the bichromatic game with speed $1:s_B$ for $s_B \geq 12$ (\Cref{thm:bichromatic}). We also provide a trade-off between $k$ and the bias: Breaker wins the bichromatic game with $1: 2 \lambda$ bias for every integer $\lambda \geq 3$ if $k \geq 3 \lceil 2\lambda/(\lambda -2)\rceil -1$ (\Cref{thm:bichromatic+}).

Lastly, we propose a strategy for Maker for the one-round bichromatic game whenever the bias in favor of Breaker is strictly less than $2$ (\Cref{ssec:HalesJewett}). Maker's strategy builds on the Density Hales-Jewett Theorem \cite{FK91} from combinatorics. In particular, Maker starts with a section of the integer lattice $\mathbb{Z}^d$, for a sufficiently large dimension $d$, mapped with a suitable chosen generic linear transformation onto a plane and perturbed to be in general position. Importantly, the perturbation ensures that some of the collinear $t$-tuples of points of the grid are mapped into a $t$-hole (with respect to Maker's points). We prove that Maker wins the bichromatic one-round game with bias $1:(2-\varepsilon)$ for every $\varepsilon > 0$ (\Cref{thm:grid}). This is the best possible, as Breaker wins the bichromatic one-round game with bias $1:s_B$ for $s_B \geq 2$ (\Cref{obs:bichrom}).

\section{Monochromatic Maker-Breaker Games}
\label{sec:monochromatic}

When proving their result, Erd\H{o}s and Szekeres~\cite{ESZ35} have introduced a notion that later became known as \textit{caps and cups}.

\begin{definition}
    A set  $\{p_1, \dots, p_k\}$ of $k$ points, labeled in increasing order by their $x$-coordinates,
    forms a \emph{$k$-cap} (\emph{$k$-cup}) if the points are in convex position
    and their convex hull is bounded from below (above) by a single edge; see \Cref{fig: cup and cap}.
\end{definition}

\begin{figure}[htp]
\centering
\begin{tikzpicture}[line cap=round,line join=round,>=triangle 45,x=1cm,y=1cm,scale=0.8]
\clip(0,-1) rectangle (13.2,3);
\fill[line width=1pt,color=qqqqff,fill=green,fill opacity=0.1]
    (9,-1.5) -- (9,0.5) -- (10,1.5) -- (11,2) -- (12,1.8) -- (13,0.8) -- (13,-1.2) -- cycle;
\draw [line width=1pt] (1,1)-- (2,2);
\draw [line width=1pt] (2,2)-- (3,1.75);
\draw [line width=1pt] (3,1.75)-- (4,0);
\draw [line width=1pt] (4.5,1)-- (5.5,0);
\draw [line width=1pt] (5.5,0)-- (6.5,-0.56);
\draw [line width=1pt] (6.5,-0.56)-- (7.5,-0.4);
\draw [line width=1pt] (7.5,-0.4)-- (8.5,1.3);
\draw [line width=1pt] (8.5,1.3)-- (4.5,1);
\draw [line width=1pt] (4,0)-- (1,1);
\draw [line width=1pt] (9,0.5) -- (10,1.5);
\draw [line width=1pt] (10,1.5) -- (11,2);
\draw [line width=1pt] (11,2) -- (12,1.8);
\draw [line width=1pt] (12,1.8) -- (13,0.8);
\draw [line width=1pt] (9,0.5) -- (9,-1.5);
\draw [line width=1pt] (13,-1.5) -- (13,0.8);
\draw (11,0.4) node {\large $\conv(P) + \vec{d}$};
\begin{scriptsize}
\draw [fill=ududff] (1,1) circle (2.5pt);
\draw [fill=ududff] (2,2) circle (2.5pt);
\draw [fill=ududff] (3,1.75) circle (2.5pt);
\draw [fill=ududff] (4,0) circle (2.5pt);
\draw [fill=ududff] (4.5,1) circle (2.5pt);
\draw [fill=ududff] (5.5,0) circle (2.5pt);
\draw [fill=ududff] (6.5,-0.56) circle (2.5pt);
\draw [fill=ududff] (7.5,-0.4) circle (2.5pt);
\draw [fill=ududff] (8.5,1.3) circle (2.5pt);
\draw [fill=ududff] (9,0.5) circle (2.5pt);
\draw [fill=ududff] (10,1.5) circle (2.5pt);
\draw [fill=ududff] (11,2) circle (2.5pt);
\draw [fill=ududff] (12,1.8) circle (2.5pt);
\draw [fill=ududff] (13,0.8) circle (2.5pt);
\end{scriptsize}
\end{tikzpicture}
\caption{Example of a 4-cap, a 5-cup and a 5-strip with respect to $\vec{d}=\overrightarrow{(0,-1)}$.}
\label{fig: cup and cap}
\end{figure}
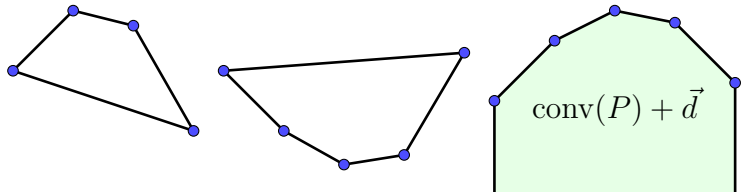

We extend these caps to a new notion of $k$-strips. Intuitively, a $k$-strip is a $(k+1)$-hole with $k$ points in $\mathbb{R}^2$ and one point ``at infinity''. Recall that the Minkowski sum of two sets $A,B\in \mathbb{R}^2$ is defined as $A+B=\{a+b: a\in A, b\in B\}$.  For a nonzero vector $v$, let $\vec{v}$ denote the halfline from the origin in direction $v$. As a shorthand, let $\vec{d}=\overrightarrow{(0,-1)}$ denote a downward vertical~halfline.

\begin{definition}
For every $k \in\mathbb{N}$, a \emph{$k$-strip} with respect to a point set $S\subset \mathbb{R}^2$ is a set $P \subset S$ of $k$ points in convex position such that there exists a halfline $\vec{v}$ for which the Minkowski sum $C=\conv(P)+\vec{v}$ satisfies ${\rm int}(C)\cap S=\emptyset$ and $\partial C\cap S=P$; see an example in \Cref{fig: cup and cap}.  We say that two $k$-strips, $P_1$ and $P_2$, are \emph{parallel} if they are $k$-strips w.r.t.\ the same halfline $\vec{v}$, and $C_1=\conv(P_1)+\vec{v}$ and $C_2=\conv(P_2)+\vec{v}$ are disjoint.
\end{definition}

In particular, if $P$ is a $k$-strip w.r.t.\ the downward halfline $\vec{d}$ (resp., upward halfline $\overrightarrow{(0,1)}$), then $P$ is a $k$-cap (resp., $k$-cup).

We also introduce cones adjacent to $k$-strips for $k\geq 2$. Let $P=\{s_1,s_2,\ldots , s_k\}$ be a $k$-strip w.r.t.\ the downward halfline $\vec{d}$, and assume that the points $s_{1}, \ldots , s_{k}$ are sorted by increasing $x$-coordinates. We define two cones, $C^-$ and $C^+$, as follows (see \Cref{fig: cone definition} for an example). Let $L(s_{1})=s_{1}+\vec{d}$ and $L(s_{k})=s_{k}+\vec{d}$ be the downward halflines starting from $s_{1}$ and $s_{k}$, respectively. Let $L^-$ be the line passing through $s_{1}$ and $s_{2}$, and $L^+$ the line passing through $s_{k-1}$ and $s_{k}$. Now let $C^-$ be the cone swept by rotating the halfline $L(s_{1})$ clockwise about $s_{1}$ until it reaches $L^-$ or it passes through a point in $S$.
Similarly, let $C^-$ be the cone swept by rotating the halfline $L(s_{k})$ counterclockwise about $s_{k}$ until it reaches $L^+$ or it passes through a point in $S$. Since the aperture of each cone is less than $\pi$, both cones are convex.
Observe that this gives us a way of extending a $k$-strip into a $(k+1)$-strip. Indeed, for any point $p\in S\cap({\rm cl}(C^-)\cup {\rm cl}(C^+))$, the set $P\cup \{p\}$ is a $(k+1)$-strip with respect to the downward halfline $\vec{d}$.

    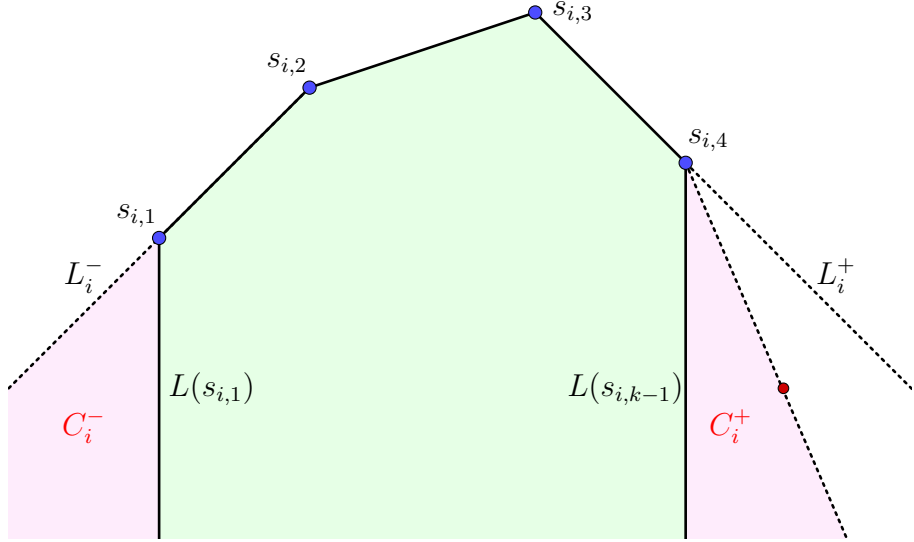
\begin{figure}
        \centering
        \begin{tikzpicture}[line cap=round,line join=round,>=triangle 45,x=1cm,y=1cm]
\clip(-6,-2) rectangle (6,5.5);
\fill[line width=1pt,color=qqqqff,fill=green,fill opacity=0.1] (-4,2) -- (-2,4) -- (1,5) -- (3,3) -- (3,-4) -- (-4,-4) -- cycle;
\fill[line width=1pt,color=zzttqq,fill=zzttqq,fill opacity=0.1] (-4.0,2.0) -- (-10.0,-4.0) -- (-4.0,-4.0) -- cycle;
\fill[line width=1pt,color=zzttqq,fill=zzttqq,fill opacity=0.1] (3.0,3.0) -- (3.0,-4.0) -- (6.0,-4.0) -- cycle;
\draw [line width=1pt] (-4.0,2.0)-- (-2.0,4.0);
\draw [line width=1pt] (-2.0,4.0)-- (1.0,5.0);
\draw [line width=1pt] (1.0,5.0)-- (3.0,3.0);
\draw [line width=1pt] (-4.0,2.0) -- (-4.0,-11.6);
\draw [line width=1pt] (3.0,3.0) -- (3.0,-11.6);
\draw [line width=1pt,dotted,domain=-6:-2.0] plot(\x,{(-12.0-2.0*\x)/-2.0});
\draw [line width=1pt, dotted] (3,3) -- (10,-4);
\draw [line width=1pt, dotted] (3,3) -- (6,-4);
\begin{scriptsize}
\draw [fill=ududff] (-4.0,2.0) circle (2.5pt);
\draw [fill=ududff] (-2.0,4.0) circle (2.5pt);
\draw [fill=ududff] (1.0,5.0) circle (2.5pt);
\draw [fill=ududff] (3.0,3.0) circle (2.5pt);
\draw[color=black]  (-4.3,2.3)  node {\large $s_{i,1}$};
\draw[color=black]  (-2.3,4.3)  node {\large $s_{i,2}$};
\draw[color=black]  (1.5,5)  node {\large $s_{i,3}$};
\draw[color=black]  (3.3,3.3)  node {\large $s_{i,4}$};
\draw[color=black] (-3.3,0) node {\large $L(s_{i,1})$};
\draw[color=black] (2.2,0) node {\large $L(s_{i,k-1})$};
\draw[color=black] (-5,1.5) node {\large $L_i^-$};
\draw[color=black] (5,1.5) node {\large $L_i^+$};
\draw[color= red] (-5,-0.5) node {\large $C_i^-$};
\draw[color= red] (3.6,-0.5) node {\large $C_i^+$};
\draw [fill=ccqqqq] (4.3,0.0) circle (2pt);
\end{scriptsize}
\end{tikzpicture}
        \caption{A 4-cap $S_i=\{s_{i,1},s_{i,2},s_{i,3},s_{i,4}\}$ and 4-strip $\conv(S_i)+\vec{d}$. Cones $C_i^-$ and $C_i^+$ are colored in pink. The half-lines $L(s_{i,1})$ and $L(s_{i,k-1})$, resp., are rotated clockwise and counterclockwise until they pass through a point in the point set or until they hit $L_i^-$ or $L_i^+$.}
        \label{fig: cone definition}
    \end{figure}

\subsection{Monochromatic Maker-Breaker Game without Bias}

In the $1:1$ game, we show by induction that Maker can create arbitrarily many $k$-strips for any $k\in \mathbb{N}$. As a $k$-strip is a $k$-hole, this immediately implies that Maker wins for any $k\in \mathbb{N}$.

\begin{theorem}
\label{thm:strips}
In the monochromatic Maker-Breaker game (without bias),
for every $k, t \in \mathbb{N}$, Maker can ensure that there are $t$ parallel $k$-strips among the points placed by both players. This can be accomplished in $t/2$ rounds for $k=1$ and at most $(\frac53\cdot 4^{k-2}-\frac23)t$ rounds for $k\geq 2$.
\end{theorem}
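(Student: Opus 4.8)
The plan is an induction on $k$ that establishes the stronger quantitative statement $f(k,t)\le g(k)\,t$, where $f(k,t)$ denotes the number of rounds Maker needs to force $t$ parallel $k$-strips, and $g(1)=\tfrac12$, $g(2)=1$, and $g(k+1)=4g(k)+2$ for $k\ge2$. The recursion rewrites as $g(k+1)+\tfrac23=4\bigl(g(k)+\tfrac23\bigr)$, so $g(k)=\tfrac53\,4^{k-2}-\tfrac23$ for $k\ge 2$, exactly the claimed bound, and $k=1$ is handled separately. Throughout, Maker works with strips with respect to the downward halfline $\vec d$ (committing to a generic nearby halfline $\vec v$ only at the very end, to clear boundary coincidences), and she keeps all her strips \emph{flat}---every edge of every strip has small slope---an invariant that costs no extra rounds and that guarantees that all the cones $C^-,C^+$ that arise are (nearly) quarter-planes.

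\emph{Base cases $k=1,2$.} For $k=1$: a single point is a $1$-strip with respect to any halfline whose ray through it misses the other points, and for a finite point set all but finitely many directions work simultaneously for every point; hence after $t/2$ rounds the (at least) $t$ points on the board already contain $t$ parallel $1$-strips, with no strategy needed. For $k=2$: after $t$ rounds the board carries $2t$ points (Breaker is forced to place one each round); fix a generic $\vec v$, sort the points by the coordinate orthogonal to $\vec v$, and pair them up consecutively. Each consecutive pair is a $2$-strip with respect to $\vec v$ (no point lies strictly between the pair in that ordering, so its shadow is empty, and general position excludes a third point on the connecting segment), and the $t$ pairs are pairwise parallel since their orthogonal ranges are disjoint. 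This uses $t=g(2)\,t$ rounds.

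\emph{Inductive step, $k\to k+1$.} Maker first spends $f(k,4t)\le g(k)\cdot 4t$ rounds to force $4t$ pairwise parallel, flat $k$-strips $P_1,\dots,P_{4t}$ with respect to $\vec d$, ordered from left to right (the inductive hypothesis is invoked with the flatness invariant built in). She then runs a conversion phase of $2t$ rounds. Group the strips into $2t$ consecutive pairs $(P_{2j-1},P_{2j})$; in round $j$ of this phase Maker plays a single point $q_j$ into the gap between $P_{2j-1}$ and $P_{2j}$, sufficiently far below, and so placed that $q_j\in\operatorname{cl}(C^+_{2j-1})\cap\operatorname{cl}(C^-_{2j})$ and that neither of the two new shadows acquires an already-present point (possible because flatness makes both cones wide and Maker may duck below the few Breaker points already lying in that gap). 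By the cone extension observation, $P_{2j-1}\cup\{q_j\}$ and $P_{2j}\cup\{q_j\}$ are now $(k+1)$-strips; they share $q_j$ and so are not mutually parallel, but Maker will retain at most one of them. If Breaker has already spoiled exactly one of $P_{2j-1},P_{2j}$ when round $j$ comes, Maker extends only the surviving one; if both are spoiled she wastes the move.

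\emph{Counting, and the main obstacle.} Since parallel strips in different pairs have disjoint $x$-ranges, a single Breaker point lies in the shadow of---hence can spoil---at most one such strip; and the shadows of the two $(k+1)$-strips produced from one pair have disjoint $x$-interiors (their closures meet only along the $\vec v$-ray out of $q_j$, which a generic final $\vec v$ removes from consideration). Therefore, to destroy every $(k+1)$-strip that a given pair can supply---i.e.\ to spoil both of its $k$-strips before conversion, or both resulting $(k+1)$-strips after, or one of each---Breaker must spend at least two moves chargeable to that pair, while each Breaker move is chargeable to at most one pair. With only $2t$ Breaker moves in the $2t$-round conversion phase, at most $t$ pairs can be wiped out, so at least $t$ pairs still carry a $(k+1)$-strip; selecting one per surviving pair and committing to a common generic $\vec v$ near $\vec d$ yields $t$ parallel $(k+1)$-strips, completing the induction. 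I expect the real work to lie in the geometric bookkeeping of the conversion phase: verifying that $C^+_{2j-1}$ and $C^-_{2j}$ always overlap in a region where $q_j$ can be placed with neither new shadow picking up an existing point, that inserting $q_j$ never spoils a strip of a neighbouring pair, and that the flatness invariant genuinely propagates through the induction at no extra cost---once these are in hand, the counting above is routine.
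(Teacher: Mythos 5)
Your proposal follows the same induction and the same numerics as the paper (start from $4t$ parallel $(k-1)$-strips, spend $2t$ conversion rounds, recurrence $r(k,t)=r(k-1,4t)+2t$), and the core counting argument---each Breaker point can be charged to at most one candidate shadow, so $2t$ Breaker points wipe out at most $t$ of the $2t$ pairs---matches the paper exactly. The one genuine divergence is how degeneracies are handled: the paper exploits the \emph{monochromatic} nature of the game and, when a Breaker point lands exactly on a boundary ray (below $p_j$, or below an extremal strip vertex), it \emph{replaces} that vertex by Breaker's point, shrinking the candidate regions; you instead commit to a generic halfline $\vec v$ near $\vec d$ at the very end so that no Breaker point sits on a boundary ray, and then each Breaker point lies in the interior of at most one candidate. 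Both work; your version avoids the convex-position bookkeeping that the replacement step implicitly requires, while the paper's version avoids having to re-verify the disjointness and emptiness conditions after a perturbation of the direction. One caveat worth flagging: the ``flatness'' invariant you build into the induction is both unnecessary and does not actually propagate. The point $q_j$ is placed far below the gap, so the two new edges $s_{2j-1,k-1}q_j$ and $s_{2j,1}q_j$ are steep, and the resulting $(k+1)$-strips are emphatically not flat. Fortunately flatness is never needed: the cone $C^+_{2j-1}$ (resp.\ $C^-_{2j}$) always has the downward ray from its apex as one boundary, so its intersection with the vertical line $\ell_j$ strictly to the right (resp.\ left) of the apex is always a nonempty downward halfline, no matter how narrow the cone is; this is precisely why $\vec\ell_j$ is a halfline and Maker can always ``duck below''. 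Dropping the flatness clause removes the only unproved step in your sketch and brings it in line with the paper's argument.
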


\begin{proof}
       We proceed by induction on $k$. In the base case, $k=1$, any $t$ points in general position form $t$ parallel $1$-strips. Similarly, for $k=2$, it is easy to see that any  $2t$ points in general position form $t$ parallel $2$-strips.
    We may assume (by rotating the coordinate system, if necessary) that the points have distinct $x$-coordinates. Then we can partition the $2t$ points into $t$ pairs of points with consecutive $x$-coordinates; and use downward $2$-strips.

    For the induction step, we are given $k,t\in \mathbb{N}$, $k\geq 3$, and assume that the claim holds for all $k',t'\in \mathbb{N}$ with $k'<k$. By the induction hypothesis, Maker can form $4t$ parallel $(k-1)$-strips with respect to set $S$ of points played by both players so far. We may assume (by rotating the coordinate system if necessary) that all $4t$ strips
    are downward strips with halfline $\vec{d}=\overrightarrow{(0,-1)}$; and the points in $S$ have distinct $x$-coordinates. In particular, the convex hulls of the $4t$ strips have pairwise disjoint projections to the $x$-axis. Label the strips by $S_1, S_2, \dots, S_{4t}\subset S$ in increasing order by $x$-projections of their convex hulls. Furthermore, for every $i\in [4t]$, label the points in $S_i$ by $s_{i,1},\ldots , s_{i,k-1}$ in increasing order by their $x$-coordinates.

    For every $j\in [2t]$, let $\ell_j$ be a vertical line between $S_{2j-1}$ and $S_{2j}$ that does not pass through any other point in $S$; and let
    \[
    \vec{\ell}_j= \ell_j\cap C^+_{2j-1}\cap C^-_{2j}.
    \]
    Note that $\vec{\ell}_j$ is a downward pointing halfline. Furthermore,
    for any point $p_j\in \vec{\ell}_j$, both $S_{2j-1}\cup \{p_j\}$ and $S_{2j}\cup \{p_j\}$ is a $k$-strip with respect to the point set $S\cup \{p_j\}$.

     In the next $2t$ rounds, Maker places arbitrary points $p_j\in \vec{\ell}_j$ for all $j\in [2t]$ such that they are in general position.
    For every $j\in [2t]$, point $p_j$ creates two \emph{candidates} for possible $k$-strips:
    $S_{2j-1}\cup \{p_j\}$ and $S_{2j}\cup \{p_j\}$. Let $R_{2j-1}= \conv(S_{2j-1}\cup \{p_j\})+\vec{d}$ and $R_{2j}=\conv(S_{2j}\cup \{p_j\})+\vec{d}$; see \Cref{fig:k-strips}.

    Meanwhile, Breaker also plays $2t$ points.
    If Breaker places any point in $\vec{\ell}_j$ below $p_j$, then we replace $p_j$ with Breaker's point in the definition of the candidate regions $R_{2j-1}$ and $R_{2j}$.
    Similarly, if Breaker places a point vertically below a point $s_{2j-1,1}$ or $s_{2j,k-1}$ for some $j\in [2t]$, then we replace this point with Breaker's point.
    In particular, if any of Breaker's points lies on the boundary of a region $R_i$, $i\in [4t]$, then we use it as a replacement.
    Any such replacement can only decrease the regions $R_{2j-1}$ and $R_{2j}$, and it maintains the properties that both $S_{2j-1}\cup \{p_j\}$ and $S_{2j}\cup \{p_j\}$ are caps, and the regions $R_i$, $i\in [4t]$, have pairwise disjoint interiors.

    By the pigeonhole principle, there are at most $t$ indices $j\in [2t]$ such that Breaker
    placed two or more points in the interior of $R_{2j-1}\cup R_{2j}$.
     Consequently, there are at least $t$ indices $j\in [2t]$ such that Breaker
    placed at most one point in the interior of $R_{2j-1}\cup R_{2j}$.
    For any such pair, the interior of $R_{2j-1}$ or $R_{2j}$ is empty,
    and so $S_{2j-1}\cup \{p_j\}$ or $S_{2j}\cup \{p_j\}$ forms a $k$-strip
    with respect to a downward halfline. Therefore, we find at least $t$ parallel $k$-strips among the points placed by both players.

   \paragraph{Analyzing the number of rounds.} Let $r(k,t)$ denote the minimum number of rounds such that Maker can ensure that at the end of round $r(k,t)$, there are $t$ parallel $k$-strips among the $2r(k,t)$ points placed by both players. As argued above, we have $r(1,t)=t/2$ and $r(2,t)=t$ for all $t\in \mathbb{N}$; and we proved the recurrence relation $r(k,t)=r(k-1,4t)+2t$ for all $k\geq 3$ and $t\in \mathbb{N}$. The recursion solves to $r(k,t)=
    (\frac53\cdot 4^{k-2}-\frac23)t$ for all $k\geq 2$.
\end{proof}

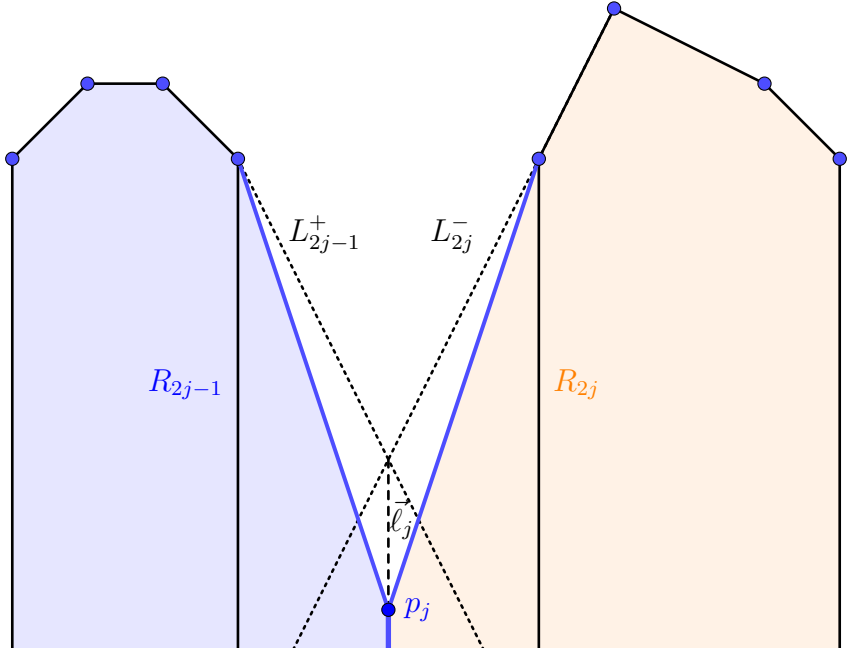
\begin{figure}
    \centering
    \begin{tikzpicture}[line cap=round,line join=round,>=triangle 45,x=1cm,y=1cm]
\clip(-5.5,-3.5) rectangle (6.5,5.5);
\fill[line width=1pt,color=qqqqff,fill=qqqqff,fill opacity=0.1] (0,-3) -- (-2,3) -- (-3,4) -- (-4,4) -- (-5,3) -- (-5,-10) -- (0,-10) -- cycle;
\fill[line width=1pt,color=qqqqff,fill=orange,fill opacity=0.1] (0,-3) -- (2,3) -- (3,5) -- (5,4) -- (6,3) -- (6,-10) -- (0,-10) -- cycle;
\draw [line width=1pt] (-5,3)-- (-4,4);
\draw [line width=1pt] (-4,4)-- (-3,4);
\draw [line width=1pt] (-3,4)-- (-2,3);
\draw [line width=1pt] (2,3)-- (3,5);
\draw [line width=1pt] (5,4)-- (3,5);
\draw [line width=1pt] (6,3)-- (5,4);
\draw [line width=1pt] (-5,3) -- (-5,-9.9);
\draw [line width=1pt] (-2,3) -- (-2,-9.9);
\draw [line width=1pt] (2,3) -- (2,-9.9);
\draw [line width=1pt] (6,3) -- (6,-9.9);
\draw [line width=1pt,dotted,domain=-12.3:3] plot(\x,{(--1-2*\x)/-1});
\draw [line width=1pt,dotted,domain=-2:13.3] plot(\x,{(-1-2*\x)/1});
\draw [line width=1pt,dashed] (0,-1) -- (0,-9.9);
\draw [line width=1.5pt, color=ududff] (-2,3) -- (0,-3);
\draw [line width=1.5pt, color=ududff] (2,3) -- (0,-3);
\draw [line width=2pt, color=ududff] (0,-5) -- (0,-3);
\begin{scriptsize}
\draw [fill=ududff] (-5,3) circle (2.5pt);
\draw [fill=ududff] (-4,4) circle (2.5pt);
\draw [fill=ududff] (-3,4) circle (2.5pt);
\draw [fill=ududff] (-2,3) circle (2.5pt);
\draw [fill=ududff] (2,3) circle (2.5pt);
\draw [fill=ududff] (3,5) circle (2.5pt);
\draw [fill=ududff] (5,4) circle (2.5pt);
\draw [fill=ududff] (6,3) circle (2.5pt);
\draw[color=black] (0.85,2) node {\large $L_{2j}^-$};
\draw[color=black] (-0.85,2) node {\large $L_{2j-1}^+$};
\draw[color=black] (0.18,-1.8) node {\large $\vec{\ell}_j$};
\draw [fill=qqqqff] (0,-3) circle (2.5pt);
\draw[color=qqqqff] (0.4,-3) node {\large $p_j$};
\draw[color=qqqqff] (-2.7,0) node {\large $R_{2j-1}$};
\draw[color=orange] (2.5,0) node {\large $R_{2j}$};
\end{scriptsize}
\end{tikzpicture}

\caption{Placing a point $p_j$ on the halfline $\vec{\ell}_j$, Maker creates two candidates for possible $k$-strips.\label{fig:k-strips}}
\end{figure}

\subsection{Monochromatic Maker-Breaker Game with Bias}

We next consider a variation of the Maker-Breaker game in which Breaker has the advantage that she is allowed to place more points than Maker in each round. Nevertheless, Maker still has a winning strategy.

\begin{theorem}\label{thm:MB}
Consider the monochromatic Maker-Breaker game with $1:s$ bias.
For every $k,t \in \mathbb{N}$ and $s\geq 1$,  Maker can ensure that there are $t$ parallel $k$-strips among the points placed by both players. 
    This can be accomplished in at most $t/(s+1)$ rounds for $k=1$ and at most $((4s)^{k-2}/(s+1)+((4s)^{k-2}-1)/(4s-1))2t$ rounds for $k\geq 2$ when $s\in \mathbb{N}$.
\end{theorem}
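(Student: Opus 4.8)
The plan is to mimic the inductive proof of \Cref{thm:strips}, but to amplify the branching factor so that it defeats Breaker's bias. We induct on $k$. For $k=1$, any $t$ points placed in general position form $t$ parallel $1$-strips once we fix a generic common halfline $\vec v$ that avoids every placed point; since after $r$ rounds the board carries $(s+1)r$ points, $r=\lceil t/(s+1)\rceil$ rounds suffice. For $k=2$, $2t$ points in general position split (after a rotation making the $x$-coordinates distinct) into $t$ pairs with consecutive $x$-coordinates, each a downward $2$-strip, and these are pairwise parallel because their $x$-projections are disjoint; this costs $\lceil 2t/(s+1)\rceil$ rounds, which is $2t/(s+1)$ when $s\in\mathbb{N}$ divides appropriately.

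For the inductive step with $k\ge 3$, I would have Maker first invoke the induction hypothesis to build $4st$ parallel $(k-1)$-strips; after a rotation we may assume they are downward strips $S_1,\dots,S_{4st}$ with pairwise disjoint $x$-projections, listed from left to right, and (this is where the induction must be set up carefully) placed at within-bundle heights that Maker controls. Partition the strips into $2t$ consecutive \emph{bundles} of $2s$ strips each. Over the next $2t$ rounds, Maker places one point $p_b$ per bundle $b$: she puts $p_b$ far below the bundle on the vertical line separating its two halves of $s$ strips, so that $p_b$ lies in the right-extension cone $C_i^+$ of each strip $S_i$ in the left half and in the left-extension cone $C_i^-$ of each strip in the right half. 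Then $S_i\cup\{p_b\}$ is a candidate $k$-strip — a $k$-cap with shadow $R_i=\conv(S_i\cup\{p_b\})+\vec d$ — for every one of the $2s$ strips of the bundle, so each of Maker's $2t$ moves spawns $2s$ candidates.

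The heart of the argument is that, in the \emph{monochromatic} game, Breaker cannot neutralize a bundle cheaply. The extension-cone observation of \Cref{sec:monochromatic} applies to Breaker's points too: if a Breaker point falls into a still-active extension cone of some $(k-1)$-strip with empty shadow, that strip together with the point is an honest $k$-strip and Maker has already won. Hence, to kill all $2s$ candidates $S_i\cup\{p_b\}$ of bundle $b$, Breaker must place, for each $i$, a point inside $R_i$ that avoids every active extension cone; one shows that for a sufficiently deep $p_b$ and a suitably monotone (toward the bundle's center) choice of heights of the $2s$ strips, these $2s$ cone-avoiding portions of $R_1,\dots,R_{2s}$ are pairwise disjoint, so Breaker must spend at least $2s$ points to neutralize a single bundle. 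Over the $2t$ rounds Breaker places $2st$ points, so by pigeonhole at most $2st/(2s)=t$ bundles are neutralized; each of the remaining $\ge t$ bundles yields a genuine $k$-strip, and these are parallel since distinct bundles occupy disjoint $x$-projections. For the round count, writing $r(k,t)$ for the number of rounds used, we obtain $r(1,t)=t/(s+1)$, $r(2,t)=2t/(s+1)$, and $r(k,t)=r(k-1,4st)+2t$ for $k\ge 3$; unrolling gives $r(k,t)=\bigl(\tfrac{(4s)^{k-2}}{s+1}+\tfrac{(4s)^{k-2}-1}{4s-1}\bigr)2t$ for $k\ge 2$ when $s\in\mathbb{N}$.

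I expect the main obstacle to be exactly the geometry of a bundle: verifying that one point $p_b$ can simultaneously extend all $2s$ of its strips — i.e.\ the common intersection of the $2s$ relevant extension cones is a nonempty downward halfline that remains uncut by the other strips of the bundle — and, more delicately, that the parts of the shadow regions $R_i$ that Breaker can safely use are pairwise disjoint. Both require a coordinated choice of the within-bundle heights and horizontal gaps of the $(k-1)$-strips, which must be threaded through the induction (so the induction hypothesis really produces strips in a usable configuration), together with a careful account of how extension cones shrink as points accumulate. By contrast, once this combinatorial-geometric skeleton is in place, the pigeonhole step and the recursion for the number of rounds are routine.
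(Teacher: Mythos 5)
Your inductive skeleton is the same as the paper's: build $4st$ parallel $(k-1)$-strips, partition into $2t$ bundles of $2s$, place one point per bundle to create $2s$ candidate $k$-strips, pigeonhole, and use the ``steal Breaker's point'' observation to turn an unhelpful Breaker placement into a free extension. The recurrence $r(k,t)=r(k-1,4st)+2t$ and its closed form also match. The one genuine gap you flag --- that the ``cone-avoiding portions of $R_1,\dots,R_{2s}$'' must be shown pairwise disjoint, and that this seems to require coordinating strip heights through the induction --- is where your route diverges and becomes harder than it needs to be.

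The paper sidesteps this entirely by doing the pigeonhole over the \emph{unextended} shadows $Q_i=\conv(S_i)+\vec d$ rather than the extended shadows $R_i=\conv(S_i\cup\{p_b\})+\vec d$. The $Q_i$'s are pairwise disjoint automatically, because the induction hypothesis already delivers the $(k-1)$-strips with disjoint $x$-projections --- no extra ``monotone height'' invariant needs to be threaded through. The part of $R_i$ outside $Q_i$ (the wedge created by adding the extension point) is contained in the extension cone $C_i^{\pm}$ by the very fact that the extension point was chosen inside that cone, so any Breaker point landing there is handled by the rotation/steal argument: rotate the vertical halfline at the relevant endpoint of $S_i$ until it hits the first Breaker point $q$, and $S_i\cup\{q\}$ is a $k$-strip. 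Thus Breaker must use a point in each disjoint $Q_i$ to neutralize a bundle, which is exactly the $2s$-point cost you wanted, without any delicate disjointness lemma. (A small cosmetic difference: the paper places the extension point $p_j$ to the \emph{left} of the whole group, on $\ell_j\cap\bigcap_{S_i\in\mathcal S_j}C_i^-$, so only the $C_i^-$ cones are in play; your center-of-bundle placement would work too but forces you to track both $C_i^-$ and $C_i^+$.) If you replace your $R_i$-based pigeonhole with the $Q_i$-based one, your proof becomes essentially the paper's, and the obstacle you were anticipating disappears.
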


\begin{proof}
    Without loss of generality, assume that $s$ is an integer. We proceed by induction on $k$. As before, the base cases $k=1$ and $k=2$ are trivial. Assume next that we are given $k,t\in \mathbb{N}$, $k\geq 2$, and the theorem holds for all $k',t'\in \mathbb{N}$ with $k'<k$. By the induction hypothesis, Maker can create $4st$ parallel $(k-1)$-strips with respect to the current point set $S$. As previously, we may assume (by rotating the coordinate system if necessary) all $k$-strips are w.r.t. a downward halfline $\vec{d}=\overrightarrow{(0,-1)}$, and all points in $S$ have distinct $x$-coordinates. Denote the $(k-1)$-strips by $S_1, S_2, \ldots ,S_{4st}$, from left to right, and the points in them as $s_{i,j}$, again ordered according to increasing $x$-coordinate. For each $(k-1)$-strip $S_i$, we define the regions $Q_i=\conv(S_i)+\vec{d}$ (see \Cref{fig: Bichrom groups} for a illustration). Note that the regions $Q_i$ are pairwise disjoint.

    We partition the $4st$ parallel $(k-1)$-strips into $2t$ groups $\mathcal{S}_1, \dots, \mathcal{S}_{2t}$, each consisting of $2s$ consecutive $(k-1)$-strips.
    For each group $\mathcal{S}_j$, we also define the region $\mathcal{Q}_j=\bigcup\{Q_i: S_i\in \mathcal{S}_j\}$, that is, the union of the regions $Q_i$ of the corresponding $(k-1)$-strips. Note that the regions $\mathcal{Q}_j$ are pairwise disjoint.
    For each group $\mathcal{S}_j$, $j\in [2t]$, let $\ell_j$ be a vertical line that lies to the left of $\mathcal{S}_j$, but right of any other $\mathcal{S}_{j'}$, $j'<j$. Let
    \[
    \vec{\ell}_j= \ell_j\cap \left( \bigcap_{S_i\in \mathcal{S}_j} C_i^-\right) ,
    \]
    and note that $\vec{\ell}_j$ is a downward halfline. Also, for any point $p_j\in \vec{\ell}_j$ and any $(k-1)$-strip $S_i\in \mathcal{S}_j$, the set $S_i\cup \{p_j\}$ is a $k$-strip with respect to the point set $S\cup \{p_j\}$; see \Cref{fig: Bichrom groups}.

    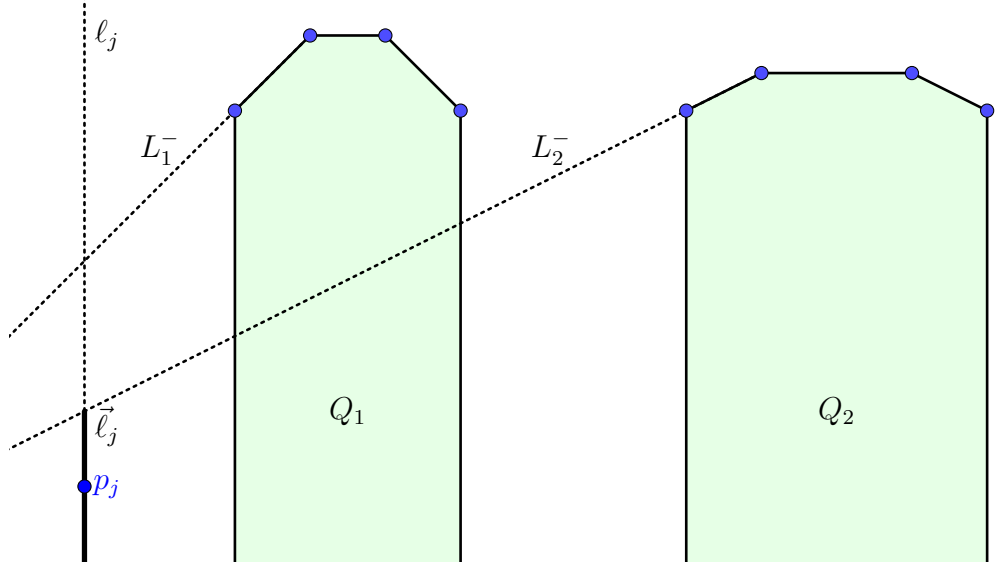
\begin{figure}
        \centering
        \begin{tikzpicture}[line cap=round,line join=round,>=triangle 45,x=1cm,y=1cm]
\clip(-7,-3) rectangle (6.5,4.5);
\fill[line width=1pt,color=qqqqff,fill=green,fill opacity=0.1] (-4,3) -- (-3,4) -- (-2,4) -- (-1,3) -- (-1,-4) -- (-4,-4) -- cycle;
\fill[line width=1pt,color=qqqqff,fill=green,fill opacity=0.1] (2,3) -- (3,3.5) -- (5,3.5) -- (6,3) -- (6,-4) -- (2,-4) -- cycle;
\draw [line width=1pt] (-4,3)-- (-3,4);
\draw [line width=1pt] (-2,4)-- (-3,4);
\draw [line width=1pt] (-2,4)-- (-1,3);
\draw [line width=1pt] (2,3)-- (3,3.5);
\draw [line width=1pt] (3,3.5)-- (5,3.5);
\draw [line width=1pt] (5,3.5)-- (6,3);
\draw [line width=1pt] (-4,3) -- (-4,-6.9);
\draw [line width=1pt] (-1,3) -- (-1,-6.9);
\draw [line width=1pt] (2,3) -- (2,-6.9);
\draw [line width=1pt] (6,3) -- (6,-6.9);
\draw [line width=1pt,dotted] (-6,-6.9) -- (-6,5.4);
\draw [line width=1pt,dotted,domain=-9.4:3] plot(\x,{(-2-0.5*\x)/-1});
\draw [line width=1pt,dotted,domain=-9.4:-3] plot(\x,{(-7-1*\x)/-1});
\draw [line width=2pt] (-6,-1) -- (-6,-3.5);
\begin{scriptsize}
\draw [fill=ududff] (-4,3) circle (2.5pt);
\draw [fill=ududff] (-3,4) circle (2.5pt);
\draw [fill=ududff] (-2,4) circle (2.5pt);
\draw [fill=ududff] (-1,3) circle (2.5pt);
\draw [fill=ududff] (2,3) circle (2.5pt);
\draw [fill=ududff] (3,3.5) circle (2.5pt);
\draw [fill=ududff] (5,3.5) circle (2.5pt);
\draw [fill=ududff] (6,3) circle (2.5pt);
\draw[color=black] (-2.5,-1) node {\large $Q_1$};
\draw[color=black] (4,-1) node {\large $Q_2$};
\draw[color=black] (0.2,2.5) node {\large $L_2^-$};
\draw[color=black] (-5,2.5) node {\large $L_1^-$};
\draw[color=black] (-5.7,4) node {\large $\ell_j$};
\draw[color=black] (-5.7,-1.2) node {\large $\vec{\ell}_j$};
\draw [fill=qqqqff] (-6,-2) circle (2.5pt);
\draw[color=qqqqff] (-5.7,-2) node {\large $p_j$};
\end{scriptsize}
\end{tikzpicture}

    \caption{Example of a group of $4$-strips for $s=1$. The point $p_j$ creates two possible $k$-strips.}
        \label{fig: Bichrom groups}
    \end{figure}

    In the next $2t$ rounds, Maker places arbitrary points $p_j\in \vec{\ell}_j$ for all $j\in [2t]$ such that the set of all points placed so far is in general position.
    For every $j\in [2t]$, point $p_j$ creates $2s$ \emph{candidates} for possible $k$-strips:
    $S_i\cup \{p_j\}$ for all $S_i\in \mathcal{S}_j$.

    In the meantime, Breaker plays $2st$ points.
    By the pigeonhole principle, there are at most $t$ indices $j\in [2t]$ such that Breaker places $2s$ or more points in the region $\mathcal{Q}_j$. Consequently, there are at least $t$ indices $j\in [2t]$ such that Breaker placed fewer than $2s$ points in $\mathcal{Q}_j$.
    We show that each such group yields at least one $k$-strip.

    Consider a group $\mathcal{S}_j$ such that Breaker placed at most $2s-1$ points in $\mathcal{Q}_j$. By the pigeonhole principle, there is a $(k-1)$-strip $S_i\in \mathcal{S}_j$ such that Breaker placed no points in the region $Q_i$.
    Consider the cone $C_i^-$. If Breaker did not place any point in $C_i^-$,
    then $S_i\cup \{p_j\}$ is a $k$-strip at this time.
    Otherwise, let $C_i^*$ be the cone swept by rotating the halfline $L(s_{i,1})$ clockwise about $s_{i,1}$ until it passes through one of Breaker's points: let $q$ be the first such point.
    Since Breaker placed a point in $C_i^-$, then $C_i^*\subset C_i^-$.
    Consequently, $S_i\cup \{q\}$ is a $k$-strip at this time.

So in each group $\mathcal{S}_i$, there is a $k$-strip, adding to a total of~$t$ strips, as required.

\paragraph{Analyzing the number of rounds.} Let $r(k,t)$ denote the minimum number of rounds such that Maker can ensure that at the end of round $r(k,t)$, there are $t$ parallel $k$-strips among the $(s+1)r(k,t)$ points placed by both players. Clearly, we have $r(1,t)=t/(s+1)$ and $r(2,t)=2t/(s+1)$ for all $t\in \mathbb{N}$; and we proved the recurrence relation $r(k,t)=r(k-1,4st)+2t$ for all $k\geq 3$ and $t\in \mathbb{N}$. The recursion solves to $r(k,t)=((4s)^{k-2}/(s+1)+((4s)^{k-2}-1)/(4s-1))2t$ for $k\geq 2$ when $s$ is a positive integer.
\end{proof}

\section{Bichromatic Maker-Breaker Game with Bias}
\label{sec:bichromatic}

In this section, we consider the bichromatic Maker-Breaker game with $1:s$ bias, first with $0\leq s<2$ and later for $s \geqslant 2$. We show that in the first case the game is a win for Maker while for sufficiently large $s$ (depending on $k$) the game is a win for Breaker.

\begin{theorem}\label{thm:s2}
The bichromatic Maker-Breaker game with $1:s$ bias, for any constant $0\leq s<2$ and for any integer $k\geq 3$, is a win for Maker.
\end{theorem}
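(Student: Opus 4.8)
The plan is to adapt the $k$-strip machinery of \Cref{thm:strips} to the bichromatic setting and prove, by induction on $k$, that for every fixed $s<2$ Maker can force arbitrarily many parallel \emph{Maker-$k$-strips}, i.e.\ $k$-strips whose $k$ points all lie in $M$ and whose region $C=\conv(P)+\vec v$ is free of \emph{all} placed points. Since such a strip is a $k$-hole contained in $M$, this settles the theorem (already $t=1$ suffices to win). The base case is handled directly: a pair of Maker points with consecutive $x$-coordinates and nothing between them in $x$ forms a Maker-$2$-strip at the instant it is created, and one produces many such pairs. For the induction step, assuming Maker can force many parallel Maker-$(k-1)$-strips, she first does so, obtaining $(k-1)$-strips $S_1,\dots,S_m$ with pairwise disjoint regions $Q_i$, all clean at the end of this phase; then, in a short ``conversion phase'', she uses the cones $C_i^\pm$ exactly as in \Cref{thm:strips}: she places one new Maker point $p_j$ in the wedge $C_{2j-1}^+\cap C_{2j}^-$ between two consecutive strips, producing two candidate Maker-$k$-strips $S_{2j-1}\cup\{p_j\}$ and $S_{2j}\cup\{p_j\}$, and she groups the $(k-1)$-strips into $\Theta(t)$ such pairs.

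The bias $s<2$ enters only in the counting for the conversion phase. During its $\Theta(t)$ rounds Breaker places at most $\lesssim s\cdot\Theta(t)$ points, and the design goal is that killing one pair costs Breaker two points that cannot be reused: the candidate regions $R_{2j-1}\supseteq Q_{2j-1}$ and $R_{2j}\supseteq Q_{2j}$ lie in disjoint vertical slabs (the $(k-1)$-strips having disjoint $x$-projections), so a Breaker point harms at most one pair, and to kill a pair she needs a point obstructing each of its two candidates. Hence at most $\tfrac12 s\cdot\Theta(t)$ pairs die, leaving $(2-s)\cdot\Theta(t)$ surviving pairs, each of which still contains a clean candidate and thus yields a Maker-$k$-strip. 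Choosing the pair count to be $\Theta\!\left(t/(2-s)\right)$ — legitimate since $s<2$ is a fixed constant — Maker obtains the required $t$ parallel Maker-$k$-strips, and the round count obeys a finite recursion $r(k,t)=r\!\left(k-1,\Theta(t/(2-s))\right)+\Theta(t/(2-s))$.

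The main obstacle is exactly the point the monochromatic argument disposes of by ``replacement''. In \Cref{thm:strips}, if Breaker plays a point $q$ directly below the shared point $p_j$ — which lies on the common boundary of $R_{2j-1}$ and $R_{2j}$ and would therefore block \emph{both} candidates with a single move — Maker simply re-reads the strip as $S_{2j-1}\cup\{q\}$, using Breaker's own point; this is forbidden here, where every strip vertex must be in $M$. Nor can Maker afford a second new point per pair: doubling Maker's cost doubles the rounds, hence Breaker's budget, and the threshold collapses back to $s<1$. So the fix must keep one new Maker point per pair while guaranteeing that no single Breaker move obstructs both candidates — for instance by letting the two candidates of a pair use two slightly different halfline directions $\vec v_1\neq\vec v_2$ near $\vec d$ (so the two boundary rays out of $p_j$ are distinct and any one Breaker point lies on at most one of them, while each region $R_\bullet$ still differs from $Q_\bullet$ only by a controllably thin sliver Maker can keep clean), or by having Maker reserve a whole downward ray of admissible positions for $p_j$ and commit to the actual position only when she converts that pair, so a ``snipe'' must hit a target she has not yet pinned down. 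One then has to check that the resulting Maker-$k$-strips, possibly carrying slightly different directions, can be made parallel — or else relax the cone-extension step at the next level to tolerate strips whose halflines lie in a small common cone. This bookkeeping, rather than any single inequality, is where the real difficulty lies.
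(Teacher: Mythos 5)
Your outline matches the paper's approach: induct on $k$, build parallel Maker-$(k-1)$-strips, extend adjacent pairs to two candidate $k$-strips with a single new Maker point $p_j$, and count using $s<2$. You also correctly identify the one place where the monochromatic argument fails: a single Breaker point placed on the common boundary ray below $p_j$ would spoil both candidates, and the ``replacement'' trick from \Cref{thm:strips} is unavailable since Breaker's points may no longer serve as strip vertices.

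What is missing is the resolution of this obstacle, and neither of your two sketches is the paper's mechanism. The paper uses \Cref{Obs_rotatestrip}: the halfline direction $\vec v$ defining a strip is a parameter of the \emph{analysis}, not a move in the game, so Maker may choose it \emph{after} Breaker has responded. After Maker places $p_1,\dots,p_t$ and Breaker places her $(2-\varepsilon)t$ points, Maker selects $\delta\in(-\delta_0,\delta_0)$ so that no Breaker point lies on any line of direction $(\delta,-1)$ through any $p_j$ --- possible because there are finitely many Breaker points but a continuum of admissible $\delta$. With $\vec v=\overrightarrow{(\delta,-1)}$ so chosen, every Breaker point lies strictly inside at most one candidate region (the two regions of a pair now meet only along the ray $p_j+\vec v$, which contains no Breaker point), and the pigeonhole count goes through. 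Your second fix --- delaying the position of $p_j$ --- does not escape the snipe: whenever Maker finally commits $p_j$, Breaker moves next and can play on the now-determined ray. Your first fix --- two fixed, slightly different directions $\vec v_1\neq\vec v_2$ --- could plausibly be pushed through with extra care about whether the two candidate regions overlap or leave a gap near $p_j$, plus a further pigeonhole to keep the surviving strips parallel for the next inductive level, but as you note yourself, you have not carried that out. The decisive insight you did not reach is that Maker should delay her choice of \emph{direction}, not of position, and that this delay is free precisely because the direction was never a move.
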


We start by making a straightforward observation about how strips can be slightly changed in direction to avoid having a point in their interiors. This is now necessary since earlier, Breaker's points could also be used to extend a strip, while now this is no longer the case. Moreover, given several disjoint downward strips, rotate the downward halfline so that they remain parallel w.r.t.\ another direction.
\begin{observation}\label{Obs_rotatestrip}
Let $\conv(P)+\vec{d}$ and $\conv(Q)+\vec{d}$ be downward strips disjoint from a point set $S$. Then there exists a $\delta_0>0$ such that for all $\delta\in (-\delta_0, \delta_0)$ and the halfline $\vec{v}=\overrightarrow{(\delta, -1)}$, the following hold:
\begin{itemize}
\item both $\conv(P)+\vec{v}$ and $\conv(Q)+\vec{v}$ are strips disjoint from $S$;
\item if $(\conv(P)+\vec{d}) \cap (\conv(Q)+\vec{d}) = \emptyset$, then $(\conv(P)+\vec{v}) \cap (\conv(Q)+\vec{v}) = \emptyset$;
\item if $\conv(P)\cap \conv(Q)=\{s\}$ and $(\conv(P)+\vec{d})\cap  (\conv(Q)+\vec{d})= \{s\}+\vec{d}$, then $(\conv(P)+\vec{v})\cap (\conv(Q)+\vec{v})=\{s\}+\vec{v}$.
\end{itemize}
\end{observation}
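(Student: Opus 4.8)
The plan is to observe that \Cref{Obs_rotatestrip} is a conjunction of finitely many statements, each depending on $\delta$ in a stable (open) way, so it suffices to verify every statement at $\delta=0$ and take $\delta_0$ to be the minimum of the finitely many thresholds thus obtained. Write $C_P(\delta)=\conv(P)+\overrightarrow{(\delta,-1)}$ and $C_Q(\delta)=\conv(Q)+\overrightarrow{(\delta,-1)}$, so $C_P(0),C_Q(0)$ are the given downward strips. Two elementary reformulations carry everything. First, $x\in\conv(R)+\vec v$ iff the ray from $x$ in direction $-v$ meets the \emph{compact} set $\conv(R)$. Second, $(\conv(P)+\vec v)\cap(\conv(Q)+\vec v)\neq\emptyset$ iff the line $\mathbb{R}v$ through the origin meets the \emph{compact} convex polygon $D:=\conv(P)-\conv(Q)$; this follows from $p+tv=q+t'v\Leftrightarrow p-q=(t'-t)v$ and the fact that $t'-t$ takes all real values as $t,t'\ge 0$ vary. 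The second reformulation is the crucial device, since it replaces the two \emph{unbounded} strips (whose behavior as $\delta\to 0$ is delicate) by one \emph{fixed bounded} set and a line through the origin that merely rotates slightly.

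For the first bullet I would argue as follows. Since $\conv(P)+\vec d$ is a downward strip, $P$ is a cap, and a short case analysis over the vertex types of a cap with distinct $x$-coordinates (leftmost, rightmost, interior) shows that the upward direction $(0,1)$ lies strictly outside the tangent cone of $\conv(P)$ at every vertex; hence so does $-\overrightarrow{(\delta,-1)}=(-\delta,1)$ for all small $\delta$, and every point of $P$ remains on $\partial C_P(\delta)$. For the emptiness part, each $s\in S$ satisfies $s\notin C_P(0)$, which by the first reformulation says that a certain vertical ray from $s$ misses $\conv(P)$; since $\conv(P)$ is compact and $S$ is finite, the slightly tilted rays still miss $\conv(P)$ once $\delta$ is small enough. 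The same holds for $Q$, so $C_P(\delta)$ and $C_Q(\delta)$ are again strips disjoint from $S$.

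For the second and third bullets I would invoke the second reformulation. If $(\conv(P)+\vec d)\cap(\conv(Q)+\vec d)=\emptyset$, then $0\notin D$ and the $y$-axis $\mathbb{R}\,\overrightarrow{(0,-1)}$ misses the compact set $D$; the set $\{\pm d/|d|:d\in D\}$ of directions is then compact and avoids $\pm(0,1)$, hence avoids $\pm\overrightarrow{(\delta,-1)}$ for small $\delta$, i.e.\ $C_P(\delta)\cap C_Q(\delta)=\emptyset$. If instead $\conv(P)\cap\conv(Q)=\{s\}$ and $(\conv(P)+\vec d)\cap(\conv(Q)+\vec d)=\{s\}+\vec d$, then, since this last intersection is one-dimensional, the $x$-ranges of $\conv(P)$ and $\conv(Q)$ meet only at the value $s_x$; it follows that $\mathbb{R}\,\overrightarrow{(0,-1)}\cap D=\{0\}$ and that $s$ is a strict horizontal extreme vertex of each of $\conv(P)$ and $\conv(Q)$ (say the rightmost of $\conv(P)$ and the leftmost of $\conv(Q)$), so that $(1,0)\in{\rm int}\,N_{\conv(P)}(s)\cap{\rm int}\bigl(-N_{\conv(Q)}(s)\bigr)={\rm int}\,N_D(0)$, where $N_D(0)=N_{\conv(P)}(s)\cap\bigl(-N_{\conv(Q)}(s)\bigr)$ is the outer normal cone of $D$ at the vertex $0$. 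A cone with a horizontal vector in its interior contains directions of both signs in the second coordinate, so its polar cone -- which is the tangent cone of $D$ at $0$ -- avoids a neighborhood of both vertical directions $\pm(0,1)$. Hence for small $\delta$ the line $\mathbb{R}\,\overrightarrow{(\delta,-1)}$ still meets $D$ only at $0$, and since $\conv(P)\cap\conv(Q)=\{s\}$ is unchanged, unravelling the second reformulation gives $(\conv(P)+\vec v)\cap(\conv(Q)+\vec v)=\{s\}+\vec v$.

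The main obstacle I anticipate is the third bullet: one has to show the intersection remains \emph{exactly} a single halfline, not merely that the two regions neither separate nor overlap in a two-dimensional set. The clean route is the normal-cone computation at the vertex $0$ of $D$ above, which is exactly the point where the hypotheses $\conv(P)\cap\conv(Q)=\{s\}$ and $(\conv(P)+\vec d)\cap(\conv(Q)+\vec d)=\{s\}+\vec d$ come into play.
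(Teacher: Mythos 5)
The paper states \Cref{Obs_rotatestrip} without proof---the label ``Observation'' signals that the authors regard it as a routine perturbation fact---so there is no authors' argument to compare against and your proof genuinely fills a gap. Your argument is correct. The key device, replacing the two unbounded regions $\conv(P)+\vec v$ and $\conv(Q)+\vec v$ by the single compact convex set $D=\conv(P)-\conv(Q)$ together with a line through the origin that merely rotates with $\delta$, is exactly what makes the third bullet precise without hand-waving: once the hypotheses force $s$ to be the strict rightmost vertex of one of $\conv(P),\conv(Q)$ and the strict leftmost of the other, the identity $N_D(0)=N_{\conv(P)}(s)\cap\bigl(-N_{\conv(Q)}(s)\bigr)$ gives $(1,0)\in\mathrm{int}\,N_D(0)$, hence the tangent cone of $D$ at $0$ lies in the open left halfplane (plus the origin), $D\subset T_D(0)$, and $\mathbb{R}v\cap D=\{0\}$ persists for every nearly vertical $v$. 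That is the only step that is not an open condition against a compact set; the first two bullets, as you note, follow by finitely many compactness arguments. One caveat worth recording explicitly: the claim that ``$P$ is a cap'' (and hence that $(0,1)$ is strictly outside every tangent cone of $\conv(P)$ at a vertex of $P$) uses that the points of $P$ have pairwise distinct $x$-coordinates; a downward strip whose polygon had a vertical leftmost or rightmost edge would fail this for one sign of $\delta$. The paper always invokes the observation after rotating coordinates so that all $x$-coordinates are distinct, so this is harmless in context, but the observation as stated does not make the assumption explicit and your write-up should flag where it enters.
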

\noindent
Now we can prove the theorem.

\begin{proof}[Proof of \Cref{thm:s2}]
Let $\varepsilon=2-s$, that is, we assume $1:(2-\varepsilon)$ bias in favor of Breaker.
We prove, by induction on $k$, that for all $k,r\in \mathbb{N}$, Maker can build $r$ parallel $k$-strips disjoint from Breaker's points. The initial cases $k\in\{1,2\}$ are easy, so we assume that $k\geq 3$ and Maker has built $2t$ parallel $(k-1)$-strips, where $t:= \lceil r/\varepsilon\rceil$.
As before, without loss of generality, we may assume that these are downward strips $S_1, \dots, S_{2t}$, labeled in increasing order by $x$-projections of their convex hulls. Let $S$ be the set of points played so far. We label the points in $S_i$ by $s_{i,1}, \dots, s_{i,k-1}$.

For each $j \in [2t-1]$, let $\vec{\ell}_j$ be a downward pointing halfline between $S_j$ and $S_{j+1}$, similarly defined as above. Any point $p_j \in \vec{\ell}_j$ has the property that $S_j \cup \{p_j\}$ and $S_{j+1} \cup \{p_j\}$ are also downward strips disjoint from $S$.
Note that for $i\neq j$, the strips $S_i \cup \{p_i\}$ and $S_{i+1} \cup \{p_i\}$ are disjoint from the strips $S_j \cup \{p_j\}$ and $S_{j+1} \cup \{p_j\}$.

By \Cref{Obs_rotatestrip}, we find $\delta_0>0$ so that for all $\delta\in (-\delta_0,\delta_0)$ and for halfline $\vec{v}=\overrightarrow{(\delta,-1)}$, the regions $\conv(S_i\cup \{p_i\})+\vec{v}$ and $\conv(S_{i+1}\cup \{p_i\})+\vec{v}$ satisfy the following:
\begin{enumerate}
\item both $\conv(S_i\cup \{p_i\})+\vec{v}$ and $\conv(S_{i+1} \cup \{p_i\})+\vec{v}$ are strips;
\item $\conv(S_i\cup \{p_i\})+\vec{v}$ and $\conv(S_{i+1}\cup \{p_i\})+\vec{v}$ intersect only in the line $p_i+\vec{v}$;
\item for $i\neq j$, the strips $\conv(S_i\cup \{p_i\})+\vec{v}$ and $\conv(S_{i+1} \cup \{p_i\})+\vec{v}$ are disjoint from the strips $\conv(S_j \cup \{p_j\})+\vec{v}$ and $\conv(S_{j+1} \cup \{p_j\})+\vec{v}$; and
\item the strips $\conv(S_i\cup \{p_i\})+\vec{v}$ and $\conv(S_{i+1}\cup \{p_i\})+\vec{v}$ are disjoint from $S$.
\end{enumerate}

Maker plays the points $p_1, \dots, p_t$. During these moves Breaker plays
points $b_1, \ldots, b_{(2-\varepsilon)t}$.
Pick a particular $\delta\in (-\delta_0, \delta_0)$ such that none of $b_1, \ldots, b_{(2-\varepsilon)t}$ is on any line in direction $(\delta,-1)$ through $p_1, \ldots, p_t$; this can be done because there are finitely many points to avoid, but infinitely many choices for $\vec{v}=\overrightarrow{(\delta,-1)}$. This, in particular, ensures that each $b_i$ is in at most one of the regions $\conv(S_i\cup \{p_i\})+\vec{v}$, $\conv(S_{i+1}\cup \{p_i\})+\vec{v}$, for $i=1, \dots, t$ (using (2) and (3)). By the pigeonhole principle, we can find at least $\varepsilon t = \varepsilon\cdot \lceil r/\varepsilon\rceil\geq r$ disjoint strips whose downward regions do not contain any Breaker points, as required.
\end{proof}

Note that for $s=2$, Maker's strategy that we described no longer works. Indeed, in the inductive step, Breaker can place points to the left and right of each of the $t$ points placed by Maker (with respect to the projection on $x$-coordinate) making sure that each $k$-strip  contains one of her points either in the interior or as a vertex; see \Cref{fig:bicromatic k-strips}. We shall prove below that for $k \geqslant 8$, Breaker can win with speed $s\geq 12$, but it remains an open problem to determine the minimum speed that allows Breaker to win.

    \begin{figure}
        \centering
        \begin{tikzpicture}[line cap=round,line join=round,>=triangle 45,x=0.75cm,y=0.75cm]
\clip(-1,-3) rectangle (13,7);
\fill[line width=1pt,color=qqqqff,fill=green,fill opacity=0.1] (1,4) -- (1.44,5) -- (2.4,5.18) -- (4,4) -- (4,-3) -- (1,-3) -- cycle;
\fill[line width=1pt,color=qqqqff,fill=green,fill opacity=0.1] (8,4.5) -- (9,6) -- (10,6) -- (11,5) -- (11,-3) -- (8,-3) -- cycle;
\draw [line width=1pt,dotted] (1,4) -- (1,-5.626666666666647);
\draw [line width=1pt,dotted] (4,4) -- (4,-5.626666666666647);
\draw [line width=1pt] (1,4)-- (1.44,5);
\draw [line width=1pt] (1.44,5)-- (2.4,5.18);
\draw [line width=1pt] (2.4,5.18)-- (4,4);
\draw [line width=1pt] (1.44,5)-- (1,4);
\draw [line width=1pt] (8,4.5)-- (9,6);
\draw [line width=1pt] (9,6)-- (10,6);
\draw [line width=1pt] (10,6)-- (11,5);
\draw [line width=1pt,dotted] (8,4.5) -- (8,-5.626666666666647);
\draw [line width=1pt,dotted] (11,5) -- (11,-5.626666666666647);
\draw [line width=1pt,dashed ,color=qqttcc] (4,4)-- (6,-2);
\draw [line width=1pt,dashed,color=qqttcc] (8,4.5)-- (6,-2);
\begin{scriptsize}
\draw [fill=ududff] (1,4) circle (2.5pt);
\draw [fill=ududff] (1.44,5) circle (2.5pt);
\draw [fill=ududff] (2.4,5.18) circle (2.5pt);
\draw[color=ududff] (2,5.8) node {$S_j$};
\draw [fill=ududff] (4,4) circle (2.5pt);
\draw [fill=ududff] (8,4.5) circle (2.5pt);
\draw [fill=ududff] (9,6) circle (2.5pt);
\draw[color=ududff] (9.5,6.5) node {$S_{j+1}$};
\draw [fill=ududff] (10,6) circle (2.5pt);
\draw [fill=ududff] (11,5) circle (2.5pt);
\draw [fill=ffwwqq] (5.46,2) circle (2.5pt);
\draw [fill=ffwwqq] (0.22,3.36) circle (2.5pt);
\draw [fill=ffwwqq] (6.7,3.94) circle (2.5pt);
\draw [fill=ududff] (6,-2) circle (2.5pt);
\draw[color=ududff] (6,-2.4) node {$p_j$};
\draw [fill=ffwwqq] (5.5,-2) circle (2.5pt);
\draw [fill=ffwwqq] (6.5,-2) circle (2.5pt);
\end{scriptsize}
\end{tikzpicture}
\caption{If Breaker has a bias $s\geq 2$ in the bichromatic game, Maker's strategy to create $k$-strips no longer works. Breaker can place one point left and right of $p_j$. }
\label{fig:bicromatic k-strips}
\end{figure}
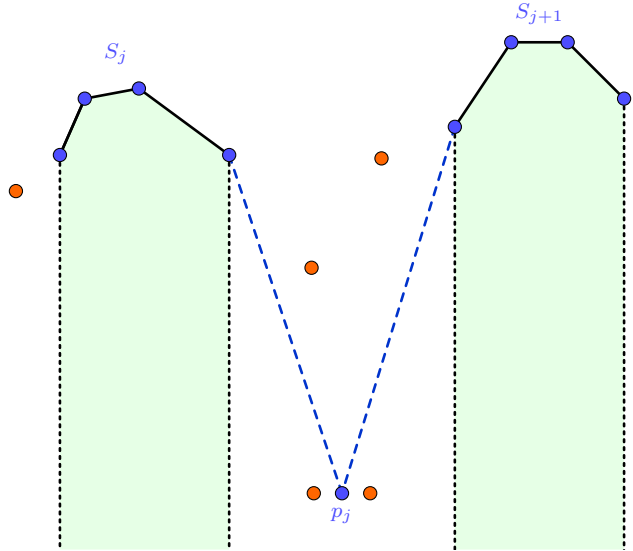

\subsection{One-Round Bichromatic Game with Bias in Favor of Breaker}
\label{subsec:Breaker}

In the remainder of this section, we consider the case with $s\geq 2$.

\begin{proposition}\label{obs:bichrom}
For every $k,n\geq 3$, Breaker wins in the bichromatic one-round Maker-Breaker game with bias $1:2$.
\end{proposition}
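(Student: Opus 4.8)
The plan is to reduce the claim to the case $k=3$ and then exhibit an explicit strategy for Breaker using exactly $2n$ points. First I would observe that it is enough for Breaker to destroy every \emph{monochromatic} $3$-hole: if $H=\{h_1,\dots,h_k\}\subseteq M$ were a $k$-hole with respect to $S=M\cup B$, then, picking any three consecutive vertices $h_{i-1},h_i,h_{i+1}$ of the convex polygon $\conv(H)$, the triangle $T=\conv\{h_{i-1},h_i,h_{i+1}\}$ satisfies $T\cap S\subseteq \conv(H)\cap S=H$, and no other $h_j$ can lie in $T$ because $H$ is in convex position (so no point of $H$ lies in the convex hull of the others). Hence $T\cap S=\{h_{i-1},h_i,h_{i+1}\}$, i.e.\ $\{h_{i-1},h_i,h_{i+1}\}\subseteq M$ is a $3$-hole with respect to $S$. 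So it suffices to guarantee that no triangle spanned by three points of $M$ is empty with respect to $S$.

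For the construction, after Maker reveals $M$, Breaker fixes a generic direction -- call it vertical -- so that the points of $M$ have pairwise distinct $x$-coordinates, and writes $M=\{p_1,\dots,p_n\}$ ordered by increasing $x$-coordinate. For each $i$ she places the two points $p_i+(0,\varepsilon_i)$ and $p_i-(0,\varepsilon_i)$ for a small $\varepsilon_i>0$ to be specified, a total of $2n$ points, which is exactly what the $1:2$ bias allows. The correctness claim is purely geometric: for any $a<b<c$ the vertical line through $p_b$ meets the open segment $p_ap_c$ in a single point $r\neq p_b$ (using $x_a<x_b<x_c$ and general position), the open segment from the vertex $p_b$ to the relative-interior point $r$ of the opposite side of $T=\conv\{p_a,p_b,p_c\}$ lies in $\mathrm{int}(T)$, and this segment is vertical -- pointing up if $y_r>y_b$ and down otherwise. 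Thus, for all small enough $\varepsilon_b$, one of the two points Breaker placed next to $p_b$ lies in $\mathrm{int}(T)$. Choosing $\varepsilon_b$ smaller than the finitely many thresholds $|y_r-y_b|$ over all $a<b$ and $c>b$ makes every empty-triangle candidate with $x$-middle vertex $p_b$ contain a Breaker point; ranging over $b$ stabs all triangles spanned by $M$, and by the reduction Breaker wins for every $k\ge 3$.

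The only place that needs genuine care is the "sufficiently small $\varepsilon$" bookkeeping together with ensuring general position of $S=M\cup B$: each $\varepsilon_b$ must be taken small enough to place the relevant Breaker point strictly inside all $O(n^2)$ triangles having $p_b$ as $x$-middle vertex, and then one applies a generic infinitesimal perturbation to the $2n$ points, which keeps each of these (finitely many, open) triangles stabbed while removing all collinear triples. Everything else -- the reduction from $k$-holes to $3$-holes, and the elementary fact that the open segment from a triangle vertex to a relative-interior point of the opposite edge lies in the triangle's interior -- is routine. I would also note in passing that two of the $2n$ points are superfluous, since $p_1$ and $p_n$ can never be the $x$-middle vertex of a triangle, so $2n-4$ points actually suffice; but $2n$ is precisely what the $1:2$ bias hands Breaker, so no such optimization is needed here.
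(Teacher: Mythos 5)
Your proof is correct and takes essentially the same approach as the paper: reduce the task of killing all $k$-holes to stabbing every triangle spanned by Maker's points, then, for each point $p_b$, place two Breaker points near $p_b$ on (roughly) the vertical line through it, one above and one below, so that whichever way the vertical ray from $p_b$ enters a triangle with $p_b$ as $x$-middle vertex, a Breaker point is caught inside. The paper uses an explicit asymmetric offset $(p_1,p_2-\delta)$ and $(p_1+\delta/L,p_2+\delta/2)$ governed by a global $\delta$ and a slope bound $L$, which also serves to avoid trivial collinearities in one shot, whereas you use symmetric offsets $p_i\pm(0,\varepsilon_i)$ with per-point thresholds followed by a generic perturbation; these are cosmetically different implementations of the same idea. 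Your explicit spelling-out of the reduction from $k$-holes to $3$-holes (three consecutive hull vertices of a $k$-hole span a $3$-hole) is a small improvement over the paper's one-line assertion, and the remark that $p_1$ and $p_n$ need no guards, so $2n-4$ Breaker points actually suffice, is a nice observation not made in the paper.
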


\begin{proof}
Suppose Maker plays a set $M$ of $n$ points in the plane in general position. Assume (by rotating the coordinate system if necessary) that no two points in $M$ have the same $x$-coordinate. For a winning strategy, Breaker plays as follows. Let $\delta$ be the minimum distance between a point in $M$ and a line spanned by two other points in $M$; and let $L\geq 2$ be an upper bound on the absolute values of all slopes of lines spanned by two points in $M$. For each point $p\in M$ with coordinates $p=(p_1,p_2)$, Breaker places two points: $(p_1,p_2-\delta)$ and $(p_1+\delta/L, p_2+\delta/2)$.

To show that Breaker wins for all $k\geq 3$, it is enough to show that $B$ contains a point in the interior of every triangle spanned by $M$. Let $\Delta abc$ be a triangle spanned by $M$.
By our assumption, we may assume that $a_1<b_1<c_1$, and suppose that point $b$ lies below the line spanned by $a$ and $c$; refer to \Cref{fig:2-round bichromatic case}. By the choice of $\delta$, point $(b_1+\delta/L,b_2+\delta/2) \in B$ is also below the line spanned by $a$ and $c$, any by the choice of $L$, it is in the interior of $\Delta abc$.  If $b$ lies above the line spanned by $a$ and $c$, then the point $(b_1,b_2-\delta/2) \in B$ is at distance less than $\delta$ from $b$, and so it is in the interior of $\Delta abc$.
\end{proof}

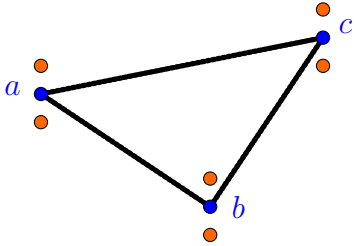
\begin{figure}
\centering
\begin{tikzpicture}[line cap=round,line join=round,>=triangle 45,x=0.75cm,y=0.75cm]
\clip(-3,-3) rectangle (4,4);
\draw [line width=2pt,dash pattern=on 1pt off 1pt] (-2,1)-- (3,2);
\draw [line width=2pt,dash pattern=on 1pt off 1pt] (3,2)-- (1,-1);
\draw [line width=2pt,dash pattern=on 1pt off 1pt] (1,-1)-- (-2,1);
\begin{scriptsize}
\draw [fill=blue] (1,-1) circle (2.5pt);
\draw[color=blue] (1.5,-1) node {\large $b$};
\draw [fill=blue] (-2,1) circle (2.5pt);
\draw[color=blue] (-2.5,1.1) node {\large $a$};
\draw [fill=blue] (3,2) circle (2.5pt);
\draw[color=blue] (3.4,2.2) node {\large $c$};
\draw [fill=ffwwqq] (-2,1.5) circle (2.5pt);
\draw [fill=ffwwqq] (-2,0.5) circle (2.5pt);
\draw [fill=ffwwqq] (1,-0.5) circle (2.5pt);
\draw [fill=ffwwqq] (1,-1.5) circle (2.5pt);
\draw [fill=ffwwqq] (3,2.5) circle (2.5pt);
\draw [fill=ffwwqq] (3,1.5) circle (2.5pt);
\end{scriptsize}
\end{tikzpicture}
    \caption{In a 1-round bichromatic game, Breaker can always ensure that no triangle spanned by Maker's points forms a 3-hole. Maker's points are blue and Breaker's points are orange.}
    \label{fig:2-round bichromatic case}
\end{figure}

\subsection{Multi-Round Bichromatic Game with Bias in Favor of Breaker}
\label{ssec:miltiround}

In this subsection, we address the multi-round bichromatic Maker-Breaker game, and show that Breaker wins with sufficient bias $s$ in her favor. Breaker follows the basic strategy in \Cref{obs:bichrom}, that is, placing points in some $\delta$-neighborhood of Maker's points. However, the suitable value of $\delta$ may decrease as Maker adds new points. In the algorithm below, Breaker chooses $\delta_i>0$ based on the first $i$ points played by Maker. In round~$i$, Breaker places $q$ points in a $\delta_i$-neighborhood of Maker's point and can choose up to one previous point $p_j$, $j<i$, played by Maker, and places new points in the smaller $\delta_i$-neighborhood of $p_j$.

\paragraph{Breaker's strategy for $s=2\lambda$.} Let $M_i=\{p_1,\ldots , p_i\}$ denote the first $i$ points placed by Maker.
Breaker maintains a set $\mathcal{D}_i=\{D_1,\ldots , D_i\}$ of pairwise disjoint closed disks, where $D_j$ is centered at $p_j$ for all $j\in \{1,\ldots , i\}$. In round $i$, Maker first plays a new point $p_i$, and then Breaker plays up to $2\lambda$ points as follows. Breaker chooses two real numbers $0<r_i<R_i$ (specified below), then creates a disk $D_i$ of radius $R_i$ and a circle $C_i$ of radius $r_i$ centered at $p_i$, and places $\lambda$ points at the vertices of a regular $\lambda$-gon inscribed in $C_i$; followed by a perturbation that moves the points on the circle $C_i$ to ensure that no three points are collinear. Specifically, when $\lambda$ is odd, rotate the regular $\lambda$-gon such that none of its vertices is on a line spanned by $p_i$ and a previous point. However, if $\lambda$ is even, then $p_i$ is the midpoint of the main diagonals of the regular $\lambda$-gon and we need to perturb the points more carefully.

If $\lambda$ is even, we choose $\lambda$ points on the circle $C_i$ as follows. In round~1 (i.e., $i=1)$, we can place the $\lambda$ points on $C_1$ arbitrarily in general position. For $i\geq 2$, let $\varepsilon_i\in (0,\frac{\pi}{10\lambda})$ be less than half of the smallest nonzero angle in $\{\angle p_x p_i p_y \mod \frac{2\pi}{\lambda}: 1\leq x,y\leq i\}$. Initially let $(q_1^0,\ldots , q_\lambda^0)$ be the vertices of a regular $\lambda$-gon inscribed in $C_i$ such that $\angle q_j^0 p_i p_{i'}> \varepsilon_i$ for all $1\leq j\leq \lambda$ and $1\leq i'<i$.
Note that such a $\lambda$-gon always exists, as each $1\leq i'<i$ can rule out a rotation range of size at most $2\varepsilon_i$ for each $1\leq j\leq \lambda$, in total at most $2\lambda \frac{\pi}{10\lambda}<\frac{\pi}{5}<2\pi$.
See \Cref{fig:perturbation} for an illustration. For $j=1,\ldots , \lambda$, we perturb $q_j^0$ to points $q_j$ as follows: rotate $q_j^0$ along $C_i$ counterclockwise by angle $\frac{1}{j}\varepsilon_i$.
(This is to ensure that $(\star)$ holds---see below.)
Note that the segments $q_j^0q_j$ do not cross any line $p_ip_{i'}$, $i'<i$, and the perturbation guarantees $\angle q_{j} p_i q_{j+1} \leq 2\pi/\lambda$ for $j=1,\ldots ,\lambda-1$, and $2\pi/\lambda<\angle q_\lambda p_i q_1 < 2\pi/\lambda +\varepsilon_i$.

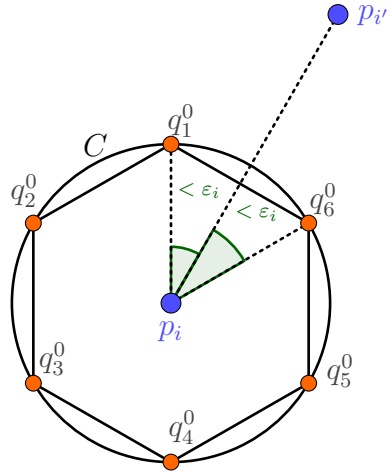
\begin{figure}
    \centering
    \begin{tikzpicture}[line cap=round,line join=round,>=triangle 45,x=1cm,y=1cm, scale=1.5]
\clip(-2,-2) rectangle (2,3);
\draw [shift={(0,0)},line width=1pt,color=qqwuqq,fill=qqwuqq,fill opacity=0.1] (0,0) -- (60:0.5) arc (60:90:0.5) -- cycle;
\draw [shift={(0,0)},line width=1pt,color=qqwuqq,fill=qqwuqq,fill opacity=0.1] (0,0) -- (30:0.75) arc (30:60:0.75) -- cycle;
\draw [line width=1pt] (0,0) circle (1.41cm);
\draw [line width=1pt,dotted] (1.48,2.56)-- (0,0);
\draw [line width=1pt, dotted] (0,1.41)-- (0,0);
\draw [line width=1pt, dotted] (0,0)-- (1.22,0.71);
\draw [line width=1pt] (0,1.41)-- (-1.22,0.71);
\draw [line width=1pt] (-1.22,0.71)-- (-1.22,-0.71);
\draw [line width=1pt] (-1.22,-0.71)-- (0,-1.41);
\draw [line width=1pt] (0,-1.41)-- (1.22,-0.71);
\draw [line width=1pt] (1.22,-0.71)-- (1.22,0.71);
\draw [line width=1pt] (1.22,0.71)-- (0,1.41);
\begin{scriptsize}
\draw [fill=ududff] (0,0) circle (2.5pt);
\draw[color=ududff] (0,-0.25) node {\large $p_{i}$};
\draw[color=black] (-0.67,1.4) node {\large $C$};
\draw [fill=ududff] (1.48,2.56) circle (2.5pt);
\draw[color=ududff] (1.8,2.55) node {\large $p_{i'}$};
\draw [fill=ffwwqq] (0,1.41) circle (2pt);
\draw[color=uuuuuu] (0.09,1.62) node {\large $q_1^0$};
\draw [fill=ffwwqq] (-1.22,0.71) circle (2pt);
\draw[color=uuuuuu] (-1.3,1) node {\large $q_2^0$};
\draw [fill=ffwwqq] (-1.22,-0.71) circle (2pt);
\draw[color=uuuuuu] (-1.05,-0.49) node {\large $q_3^0$};
\draw [fill=ffwwqq] (0,-1.41) circle (2pt);
\draw[color=uuuuuu] (0.1,-1.10) node {\large $q_4^0$};
\draw [fill=ffwwqq] (1.22,-0.71) circle (2pt);
\draw[color=uuuuuu] (1.5,-0.6) node {\large $q_5^0$};
\draw [fill=ffwwqq] (1.22,0.71) circle (2pt);
\draw[color=uuuuuu] (1.35,0.92) node {\large $q_6^0$};
\draw[color=qqwuqq] (0.76,0.8) node {$> \varepsilon_i $};
\draw[color=qqwuqq] (0.25,1) node {$> \varepsilon_i$};
\end{scriptsize}
\end{tikzpicture}
    \caption{Illustration on how to set up the to-be-perturbed $\lambda$-gon for $\lambda = 6$ and one point $p_{i'}$. We choose the points $(q_1^0, \dots, q_\lambda^0)$ such that they are the vertices of a regular 6-gon and such that $\angle q_j^0 p_i p_{i'} > \varepsilon_i$ for all $j \in [\lambda]$. For the subsequent perturbation, we move $q_j^0$ counterclockwise along $C$ by angle $\varepsilon_i / j$ for all $j=1,\ldots, \lambda$.}
    \label{fig:perturbation}
\end{figure}

Furthermore, if $p_i\in D_j$ for some $j<i$, then Breaker chooses new values for $r_j$ and $R_j$ such that $0<r_j<R_j$, and replaces the disk $D_j$ and the circle $C_j$; and places $\lambda$ new points on the new circle $C_j$ (the new disk and circle will still be denoted by $D_j$ and $C_j$, respectively). Overall, Breaker places at most $s=2\lambda$ points in round $i$.

It remains to specify the radii $0<r_i<R_i$, and the possible replacement radii $0<r_j<R_j$. We first specify $R_i$ and $R_j$. If $p_i\notin D_j$ for any $j<i$, then let $R_i$ be any positive real such that $D_i$ is disjoint from all previous disks $D_j$, $j<i$. Otherwise, $p_i\in D_j$ for a unique $j\in \{1,\ldots ,i-1\}$. In this case, we set $R_j:=|p_ip_j|/2$, so that the new disk $D_j$ of radius $R_j$ centered at $p_j$ does not contain $p_i$, and then let $R_i>0$ be any real such that $D_i$ is disjoint from all previous disks $D_j$, $j<i$.

For choosing the radius $r_i$ (and $r_j$ if necessary), we proceed as follows. For all $a\in \{p_1,\ldots , p_{i-1}\}$, let $c_i(a)$ be the intersection point of the line $ap_i$ and the boundary of $D_i$ such that $\angle ap_ic_i(a)=\pi$;
see \Cref{fig: defining ci}. Now let $r_i>0$ be sufficiently small so that the circle of radius $r_i$ centered at $p_i$ is disjoint from the lines
$b c_i(a)$  
and $ab$ for all $\{a,b\}\subset \{p_1,\ldots, p_{i-1}\}$.
In case the disk $D_j$ has been replaced, then we recompute the points $c_j(a)$ for all $a\in \{p_1,\ldots, p_{j-1}\}$.
This completes the description of Breaker's strategy for $s=2\lambda$.

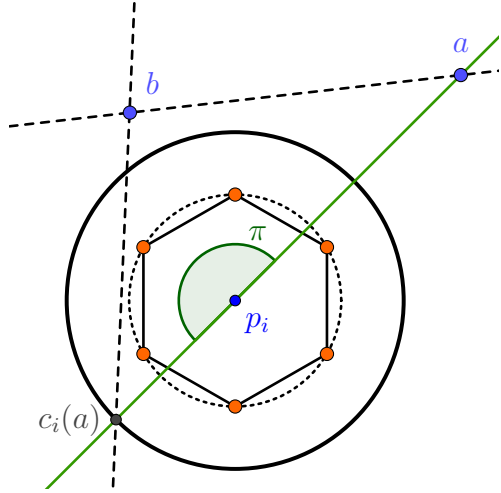
\begin{figure}
    \centering
    \begin{tikzpicture}[line cap=round,line join=round,>=triangle 45,x=1cm,y=1cm, scale=1]
\clip(-3,-2.5) rectangle (3.5,4);
\draw [shift={(0,0)},line width=1pt,color=qqwuqq,fill=qqwuqq,fill opacity=0.1] (0,0) -- (45:0.75) arc (45:225:0.75) -- cycle;
\draw [line width=1.5pt] (0,0) circle (2.24cm);
\draw [line width=1pt,dotted] (0,0) circle (1.41cm);
\draw [line width=1pt] (0,1.41)-- (1.22,0.71);
\draw [line width=1pt] (1.22,0.71)-- (1.22,-0.71);
\draw [line width=1pt] (1.22,-0.71)-- (0,-1.41);
\draw [line width=1pt] (0,-1.41)-- (-1.22,-0.71);
\draw [line width=1pt] (-1.22,-0.71)-- (-1.22,0.71);
\draw [line width=1pt] (-1.22,0.71)-- (0,1.41);
\draw [line width=1pt, dashed, domain=-3:1] plot(\x,{22.66*\x + 34.33});
\draw [line width=1pt, dashed, domain=-3:4] plot(\x,{0.113636*\x + 2.65909});
\draw [line width=1pt, color=ttzzqq,domain=-2.97:6.20] plot(\x,\x);
\begin{scriptsize}
\draw [fill=qqqqff] (0,0) circle (2pt);
\draw[color=qqqqff] (0.3,-0.3) node {\large $p_i$};
\draw [fill=ududff] (3,3) circle (2.5pt);
\draw [fill=ududff] (-1.4,2.5) circle (2.5pt);
\draw[color=ududff] (-1.1,2.9) node {\large $b$};
\draw[color=ududff] (3,3.4) node {\large $a$};
\draw [fill=ffwwqq] (0,1.41) circle (2.5pt);
\draw [fill=ffwwqq] (-1.22,0.71) circle (2.5pt);
\draw [fill=ffwwqq] (1.22,0.71) circle (2.5pt);
\draw [fill=ffwwqq] (1.22,-0.71) circle (2.5pt);
\draw [fill=ffwwqq] (0,-1.41) circle (2.5pt);
\draw [fill=ffwwqq] (-1.22,-0.71) circle (2.5pt);
\draw [fill=uuuuuu] (-1.5845,-1.5845) circle (2pt);
\draw[color=uuuuuu] (-2.2,-1.5845) node {\large $c_i(a)$};
\draw[color=qqwuqq] (0.3,0.9) node {\large $\pi$};
\end{scriptsize}
\end{tikzpicture}
    \caption{Example on how to define $c_i(a)$ and $r_i$ for $\lambda = 6$. We pick $r_i$ such that lines $c_i(a)b$ and $ab$ are both disjoint from the circle centered around $p_i$ with radius $r_i$. The blue points represent Maker's points, while the orange points are Breaker's points.}
    \label{fig: defining ci}
\end{figure}

\begin{theorem}\label{thm:bichromatic}
In the bichromatic $1:12$ Maker-Breaker game with $k\geq 8$, Breaker can win.
\end{theorem}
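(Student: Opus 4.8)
The plan is to show that, under Breaker's strategy, no eight of Maker's points in convex position can ever be part of a $k$-hole with $k\geq 8$; concretely, I would prove that at the end of every round, for every $8$-element subset $T\subseteq M$ in convex position, Breaker has already placed a point in the interior of $\conv(T)$. This is enough: a $k$-hole $H\subseteq M$ with $k\geq 8$ has eight vertices $T$ with $\conv(T)\subseteq\conv(H)$, so a Breaker point in $\operatorname{int}\conv(T)$ lies in $\conv(H)$; by the general-position assumption on $M\cup B$ (Breaker's perturbation guarantees no three points are collinear) this Breaker point is not on an edge of $\conv(H)$ and hence not a vertex of $H$, contradicting $\conv(H)\cap(M\cup B)=H$. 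Note also that the relevant Breaker points are already on the board, since Breaker places the perturbed $\lambda$-gon around each Maker point in the same round that point is played.

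Fix such a $T=\{p_{i_1},\dots,p_{i_8}\}$ and set $\Pi=\conv(T)$. First I would isolate a \emph{good} vertex $p:=p_{i_m}$ of $\Pi$: one whose interior angle $\theta$ is large — the interior angles of an octagon sum to $6\pi$, so at least one vertex has $\theta\geq 3\pi/4$, and a slightly finer count gives several vertices with $\theta\geq\pi/2$ — and, crucially, one that was placed early relative to the other vertices of $\Pi$ lying near it, so that every edge of $\Pi$ not incident to $p$ is ``controlled'': either both its endpoints were placed before $p$, or its far endpoint lies outside the disk $D_{i_m}$ and is handled via the auxiliary points $c_{i_m}(a)$. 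It is exactly the need to guarantee simultaneously a wide enough angle and such a favourable ordering that forces $k\geq 8$ (equivalently $k\geq 3\lceil 2\lambda/(\lambda-2)\rceil-1$ with $\lambda=6$); with fewer vertices of $\Pi$ one cannot ensure both at once.

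Next, look at the perturbed regular $\lambda$-gon with $\lambda=6$ that Breaker placed on the circle $C_{i_m}$ of radius $r=r_{i_m}$ around $p$ (in its final form, after any shrinkings of $D_{i_m}$; each re-placement re-establishes all the relevant properties with respect to the unchanged earlier point set $M_{i_m-1}$). By construction every angular gap $\angle q_j p q_{j+1}$ is at most $2\pi/6+\varepsilon_{i_m}<3\pi/4$, so the open cone at $p$ of angle $\theta\geq 3\pi/4$ bounded by the two edges of $\Pi$ at $p$ contains at least one hexagon vertex $q_j$; general position puts $q_j$ off all edge lines of $\Pi$, so $q_j$ is strictly inside that cone, i.e.\ on the interior side of both edges of $\Pi$ incident to $p$. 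It remains to show $q_j$ is on the interior side of the other six edges, hence $q_j\in\operatorname{int}\Pi$. For an edge of $\Pi$ with both endpoints placed before $p$, this is immediate: $r$ was chosen smaller than the distance from $p$ to the line through any two earlier points, so all of $B(p,r)$ (in particular $q_j$) lies on $p$'s side of that edge. For an edge with a later endpoint $z$: $z$ lies outside $D_{i_m}$ (Breaker shrinks $D_{i_m}$ whenever a later Maker point would land inside, keeping the disks pairwise disjoint and free of other Maker points), so the part of that edge that could come near $p$ must cross $\partial D_{i_m}$, and the points $c_{i_m}(a)$ on $\partial D_{i_m}$ — taken over earlier points $a$ roughly opposite to $z$ across $p$ — pin down a line $b\,c_{i_m}(a)$ that $r$ was explicitly chosen to avoid; chasing this through shows $q_j$ stays on the interior side of this edge too. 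Combining, $q_j\in\operatorname{int}\Pi$, which completes the argument.

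The step I expect to be the main obstacle is this last one: verifying that the fixed small radius $r=r_{i_m}$, committed to before Maker's subsequent moves, still keeps $q_j$ inside $\Pi$ against edges of $\Pi$ produced by later Maker points. This is where the full apparatus — the pairwise disjoint, shrinking disks $D_i$, the ``antipodal'' boundary points $c_i(a)$, and the choice of $r_i$ below the distances to all lines $ab$ and $b\,c_i(a)$ — is genuinely needed, and it is also the origin of the quantitative threshold $k\geq 8$. The remaining ingredients (the reduction to $8$ points, the angle count on the octagon, the hexagon-gap bound coming from $\lambda=6$, and the general-position arguments) are routine.
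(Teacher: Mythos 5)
Your high-level ingredients (the perturbed hexagons, the angle count on a convex polygon, the role of $r_{i}$ and the auxiliary points $c_{i}(a)$) are the right ones, but the decomposition you build around them has a genuine gap. You aim to find a single ``good'' vertex $p$ of an $8$-gon $\Pi$ with a wide angle \emph{and} favourable timing relative to every other vertex, and then to show the hexagon point $q_j$ lands in the interior of the \emph{entire} octagon $\Pi$. Both parts are problematic. First, no such vertex need exist: in the cyclic arrangement of $\Pi$, the only vertex guaranteed to be placed after both of its neighbours is the single last-placed one, and there is no reason for that vertex to have a large interior angle. Second, and more seriously, to place $q_j$ inside all of $\Pi$ you must control all seven non-incident edges of $\Pi$, but the strategy gives you no handle on an edge $uv$ of $\Pi$ where \emph{both} $u$ and $v$ are placed after $p$: the radius $r_i$ was chosen to avoid lines through \emph{earlier} pairs only, and the points $c_i(a)$ are only defined for earlier points $a$. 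If $p$ is placed, say, third among the eight vertices, several edges of $\Pi$ have both endpoints later, and neither mechanism applies. You flag the hard step as the main obstacle, but it is not merely technical --- it is an obstruction that your choice of target (the full octagon) creates.

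The paper sidesteps exactly this. From a $7$-hole $H$ it extracts four \emph{consecutive} vertices $q_1,q_2,q_3,b$ with $q_1,q_2,q_3$ each having interior angle at least $\pi/3$, and then cases on which of $q_1,q_2,q_3$ was placed \emph{last}; the hexagon is centred at that last-placed vertex, and the Breaker point is shown to lie inside the small triangle or quadrilateral on $\{q_1,q_2,q_3,b\}$ rather than inside all of $\conv(H)$. This guarantees that of the at most four relevant vertices, at most one (namely $b$) can postdate the hexagon's centre, and that one problematic edge always has its other endpoint placed earlier --- which is precisely what makes the $c_i(\cdot)$ continuous-motion argument applicable. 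So the key idea you are missing is to shrink the target region to a quadrilateral of consecutive vertices chosen so that timing issues are confined to a single edge, and to get the timing condition ``for free'' by taking the last-placed of three wide-angle vertices rather than trying to find one vertex that simultaneously has a wide angle and dominates the ordering.
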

\begin{proof}
Assume without loss of generality that $k=8$. We prove, by contradiction, that at the end of Breaker's move, the interior of every 7-hole formed by points in $M$ contains some point in $B$. This implies that Maker cannot create an 8-hole in the next round.

Breaker follows the strategy described above for $s=12$ (that is, $\lambda=6$). Recall that Breaker places points near the vertices of a regular hexagon inscribed in a circle $C_i$ centered at each point $p_i\in M$. The central angle between two consecutive vertices of a regular hexagon is $2\pi/6=\pi/3$. A perturbation moves the vertices along the circle $C_i$ such that one of the angles between two consecutive points exceeds $\pi/3$. However, the perturbation guarantees the following:
\begin{quote}
    If $p_i,p_x,p_y\in M$, such that $\angle p_xp_ip_y>\pi/3$ and Maker placed $p_x$ before $p_i$ and $p_y$, then one of Breaker's points on the circle $C_i$ lies in the angular domain $\angle p_xp_ip_y$. \hfill $(\star)$
\end{quote}
This can be seen in the following way. The perturbation moves all vertices of the hexagon counterclockwise by at most angle $\varepsilon_i$. If $\angle p_xp_ip_y\geq \pi/3+\varepsilon_i$, then the angular domain will contain at least one of the perturbed points. Assume that $\pi/3< \angle p_xp_ip_y<\pi/3+\varepsilon_i$. Since $\varepsilon_i<\pi/3$, a counterclockwise perturbation moves at most one vertex of the hexagon out of $\angle p_xp_ip_y$.
After the perturbation, only $\angle q_6 p_i q_1$ exceeds $\pi/3$, so we may assume that
$\angle p_xp_ip_y$ contains exactly one vertex of the regular hexagon, namely $q_1^0$.
Suppose, for contradiction, that $q_1$ lies outside of $\angle p_xp_ip_y$.
Since $\varepsilon_i$ was chosen such that if Maker placed both $p_x$ and $p_y$ before $p_i$, then $q_1$ and $q_1^0$ are on the same side of both lines $p_ip_x$ and $p_ip_y$, we have that $q_1$ lies in $\angle p_xp_ip_y$. Hence, we may assume that Maker placed these three points in the order $p_x,p_i,p_y$. The choice of $\varepsilon_i$ ensures that $q_1$ and $q_1^0$ are on the same side of the line $p_xp_i$. Consequently, $q_1^0q_1$ crosses $p_ip_y$, and so $\angle q_1^0 p_ip_y<\angle q_1^0 p_i q_1 = \epsilon_i$.
Under these assumptions, $p_6^0$, $p_x$, $q_1^0$, and $p_y$ are in this counterclockwise order around $p_y$; see \Cref{fig:angle explanation}.
In particular, we have
$\angle q_6^0 p_i p_x +\angle p_x p_i p_y
= \angle q_6^0 p_i q_1^0 + \angle q_1^0 p_i p_x$,
which in turn gives
$\angle q_6^0 p_i p_x
=\angle q_6^0 p_i q_1^0 + \angle q_1^0p_ip_x- \angle p_x p_i p_y
< \pi/3 +\varepsilon_i-\pi/3=\varepsilon_i$.
However, Breaker placed $q_6^0$ such that $\angle p_x p_i q_6^0 > \varepsilon_i$. This is a contradiction, proving $(\star)$.

\begin{figure}
    \centering
\begin{tikzpicture}[line cap=round,line join=round,>=triangle 45,x=2cm,y=2cm]
\clip(-2.5,-0.25) rectangle (2.5,3);

\draw [line width=1pt] (-0.64,2.63)-- (0.00,0.00);
\draw [line width=1pt] (2,1.47)-- (0.00,0.00);
\draw [line width=1pt] (0.00,2.00)-- (0.00,0.00);
\draw [line width=1pt] (0.00,0.00)-- (1.81,0.87);

\draw [shift={(0.00,0.00)},line width=1pt,color=qqwuqq,->]
plot[domain=0.47:0.6,variable=\t]
({1.8*cos(\t r)},{1.8*sin(\t r)});

\draw [shift={(0.00,0.00)},line width=1pt,color=qqwuqq,->]
plot[domain=0.7:1.75,variable=\t]
({cos(\t r)},{sin(\t r)});

\draw [shift={(0.00,0.00)},line width=1pt,color=pink,->]
plot[domain=0.51:1.5,variable=\t]
({1.4*cos(\t r)},{1.4*sin(\t r)});

\draw [shift={(0.00,0.00)},line width=1pt,color=pink,->]
plot[domain=1.6:1.8,variable=\t]
({1.8*cos(\t r)},{1.8*sin(\t r)});

\draw [shift={(0.00,0.00)},line width=1pt,dotted]
plot[domain=0.00:2.61,variable=\t]
({2*cos(\t r)},{2*sin(\t r)});

\draw [fill=ududff] (0.00,0.00) circle (2.50pt);
\draw[color=ududff] (-0.25,0) node {\large $p_i$};

\draw [fill=ududff] (-0.64,2.63) circle (2.50pt);
\draw[color=ududff] (-0.9,2.6) node {\large $p_y$};

\draw [fill=ffwwqq] (0.00,2.00) circle (2.50pt);
\draw[color=ffwwqq] (0,2.2) node {\large $q_1^0$};

\draw [fill=ududff] (2,1.47) circle (2.50pt);
\draw[color=ududff] (1.9,1.65) node {\large $p_x$};

\draw [fill=ffwwqq] (1.81,0.87) circle (2.50pt);
\draw[color=ffwwqq] (1.85,1.1) node {$q_6^0$};

\draw[color=black] (0.60,2.06) node {\large $C_i$};

\end{tikzpicture}
\caption{Visualization of the
equation $\angle q_6^0 p_i p_x +\angle p_x p_i p_y = \angle q_6^0 p_i q_1^0 + \angle q_1^0 p_i p_x$.
The left hand side is drawn with green angles, and the right hand side with pink ones.}
\label{fig:angle explanation}
\end{figure}

Consider the first round in which, at the end of Breaker's move, there is a 7-hole $H\subset M$ whose convex hull does not contain any point from $B$.
\begin{claim}
The polygon $\conv(H)$ has four consecutive vertices $q_1, q_2, q_3, b$ such that the interior angles of $\conv(H)$ at $q_1$, $q_2$, and $q_3$ are each at least $\pi/3$.
\end{claim}
\begin{proof}
    Let the interior angles in $H$ be $\theta_1\le  \dots\le  \theta_7$, noting that these are all less than $\pi$ and sum to $\theta_1+\dots +\theta_7=5\pi$ (since in any convex $t$-gon, the interior angles sum to $(t-2)\pi$). This implies that at most two of $\theta_1,\dots, \theta_7$ are $\le \pi/3$ (or else $\theta_1+\theta_2 +\theta_3\le 3\pi/3\le \pi$ and $\theta_4+ \theta_5+\theta_6+\theta_7< 4\pi$ contradicts $\theta_1+\dots +\theta_7=5\pi$). Let $a$ and $b$ be the points with the two smallest angles $\theta_1$ and $\theta_2$. One of the two arcs of $H$ joining $a$ and $b$ must contain at least $\lceil 5/2\rceil= 3$ of the remaining $5$ vertices. Letting $q_1, q_2, q_3$ be three consecutive vertices before $b$ on this arc proves the claim.
\end{proof}

We distinguish between two cases based on the order in which Maker placed $q_1$, $q_2$, and $q_3$.

\begin{itemize}
\item[] \emph{Case 1: $q_2$ is the last point in $\{q_1,q_2,q_3\}$ placed by Maker.}
Let $i$ be the round in which Maker placed $q_2$, that is, $p_i=q_2$. Property $(\star)$ implies that the angular domain $\angle q_1 q_2 q_3$ contains some point $v\in B\cap C_i$.  By the choice of $r_i$, the line $q_1q_3$ is disjoint from the circle $C_i$. This implies that $v$ is in the interior of $\Delta q_1q_2q_3 \subset  \mathrm{conv}(H)$.

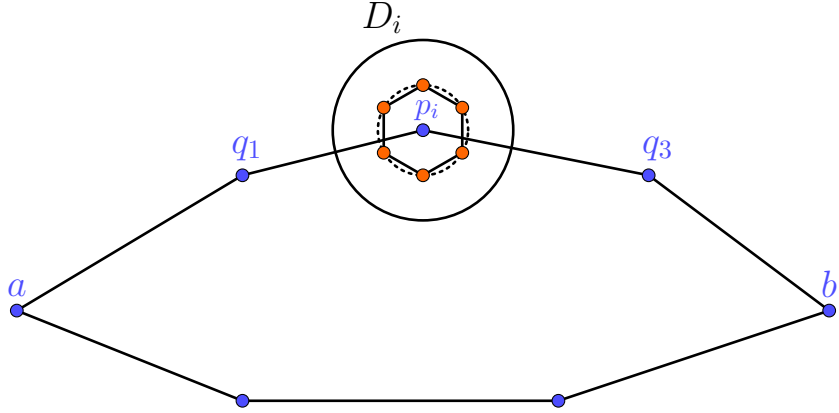
\begin{figure}
    \centering
    \begin{tikzpicture}[line cap=round,line join=round,>=triangle 45,x=1cm,y=1cm, scale=0.6]
\clip(-11,-3) rectangle (11,7);
\draw [line width=1pt] (-4,3)-- (0,4);
\draw [line width=1pt] (0,4)-- (5,3);
\draw [line width=1pt] (5,3)-- (9,0);
\draw [line width=1pt] (9,0)-- (3,-2);
\draw [line width=1pt] (3,-2)-- (-4,-2);
\draw [line width=1pt] (-4,-2)-- (-9,0);
\draw [line width=1pt] (-9,0)-- (-4,3);
\draw [line width=1pt] (0,4) circle (2cm);
\draw [line width=1pt,dotted] (0,4) circle (1cm);
\draw [line width=1pt] (0,5)-- (0.87,4.5);
\draw [line width=1pt] (0.87,4.5)-- (0.87,3.5);
\draw [line width=1pt] (0.87,3.5)-- (0,3);
\draw [line width=1pt] (0,3)-- (-0.87,3.5);
\draw [line width=1pt] (-0.87,3.5)-- (-0.87,4.5);
\draw [line width=1pt] (-0.87,4.5)-- (0,5);
\begin{scriptsize}
\draw [fill=ududff] (9,0) circle (4pt);
\draw[color=ududff] (9,0.6) node {\Large $b$};
\draw [fill=ududff] (-9,0) circle (4pt);
\draw[color=ududff] (-9,0.5) node {\Large $a$};
\draw [fill=ududff] (-4,-2) circle (4pt);
\draw [fill=ududff] (3,-2) circle (4pt);
\draw [fill=ududff] (5,3) circle (4pt);
\draw[color=ududff] (5.2,3.6) node {\Large $q_3$};
\draw [fill=ududff] (0,4) circle (4pt);
\draw[color=ududff] (0.1,4.45) node {\large $p_i$};
\draw [fill=ududff] (-4,3) circle (4pt);
\draw[color=ududff] (-3.9,3.6) node {\Large $q_1$};
\draw[color=black] (-0.9,6.5) node {\Large $D_i$};
\draw [fill=ffwwqq] (0,5) circle (4pt);
\draw [fill=ffwwqq] (0.87,4.5) circle (4pt);
\draw [fill=ffwwqq] (0.87,3.5) circle (4pt);
\draw [fill=ffwwqq] (0,3) circle (4pt);
\draw [fill=ffwwqq] (-0.87,3.5) circle (4pt);
\draw [fill=ffwwqq] (-0.87,4.5) circle (4pt);
\end{scriptsize}
\end{tikzpicture}
    \caption{Picture of Breaker's strategy (orange) if $q_2$ is the last point added by Maker (blue). Breaker places at least one point in the interior of the 7-hole.}
    \label{fig: case one bichrom}
\end{figure}

\item[] \emph{Case 2: $q_1$ or $q_3$ is the last point in $\{q_1,q_2,q_3\}$ placed by Maker.} Assume without loss of generality that $q_3$ is the last point in $\{q_1,q_2,q_3\}$. Let $i$ be the round in which Maker placed $q_3$, that is, $p_i=q_3$.
Property $(\star)$ implies that the angular domain $\angle q_2 q_3 b$ contains some point $v\in B\cap C_i$. We further distinguish between two subcases.
    \begin{itemize}
    \item[] \emph{Case 2a: Maker placed $b$ before $q_3$.} By the choice of $r_i$, the circle $C_i$ is disjoint from the line $q_2b$.
    \item[] \emph{Case 2b: Maker placed $b$ after $q_3$.}
    Our goal is to show that $C_i$ is disjoint from the line $q_1b$ in this case. Suppose, for the sake of contradiction, that the line $q_1b$ intersects $C_i$.
    Breaker's strategy guarantees that $b\notin D_i$ (indeed, if $b\in D_i$, when Maker placed $b$, then $D_i$ was replaced by a smaller disk). Recall that $\pi/3\leq \angle q_2 q_3 b <\pi$. Let $x:=b$. We continuously move $x$ along the line $bp_i$ to the first intersection point between $bp_i$ and the boundary of $D_i$, and then continuously move $x$ along the boundary of $D_i$ while increasing the angle $\angle q_2 q_3 x$ to $\pi$.
    In both portions of the continuous motion, the line $q_2x$ intersects $C_i$ at all times; see \Cref{fig:case 2b}.
    At the end of the continuous motion, we have $x=c_i(q_2)$.
    However, by the choice of the radius $r_i$, the line $q_1 c_i(q_2)$ is disjoint from $C_i$: a contradiction. This shows that the line $q_1b$ is disjoint from $C_i$.
    \end{itemize}

\begin{figure}
    \centering
    \begin{tikzpicture}[line cap=round,line join=round,>=triangle 45,x=1cm,y=1cm]
\clip(0,-1) rectangle (9,5);
\draw [shift={(5,2)},line width=1pt,color=qqwuqq,fill=qqwuqq,fill opacity=0.1] (0,0) -- (166:0.4) arc (166:346:0.4) -- cycle;
\draw [line width=1pt] (1,3)-- (5,2);
\draw [line width=1pt] (5,2)-- (8,0);
\draw [line width=1pt,dotted] (1,3)-- (8,0);
\draw [line width=1pt,dashed] (5,2) circle (1cm);
\draw [line width=1pt] (4.7,3)-- (5.7,2.7);
\draw [line width=1pt] (5.7,2.7)-- (6,1.8);
\draw [line width=1pt] (6,1.8)-- (5.3,1);
\draw [line width=1pt] (5.3,1)-- (4.3,1.3);
\draw [line width=1pt] (4.3,1.3)-- (4,2.2);
\draw [line width=1pt] (4,2.2)-- (4.7,3);
\draw [line width=1pt, dashed] (7.13,1.47)-- (5,2);
\draw [line width=1pt] (5,2) circle (2.2cm);
\begin{scriptsize}
\draw [fill=ududff] (1,3) circle (2.5pt);
\draw[color=ududff] (1.1,3.3) node {\large $q_2$};
\draw [fill=ududff] (5,2) circle (2.5pt);
\draw[color=ududff] (5.1,2.3) node {\large $q_3$};
\draw [fill=ududff] (8,0) circle (2.5pt);
\draw[color=ududff] (8.1,0.3) node {\large $b$};
\draw[color=black] (4,2.8) node {\large $C_i$};
\draw [fill=ffwwqq] (4.7,3) circle (2.5pt);
\draw [fill=ffwwqq] (4,2.2) circle (2.5pt);
\draw [fill=ffwwqq] (4.3,1.3) circle (2.5pt);
\draw [fill=ffwwqq] (5.3,1) circle (2.5pt);
\draw [fill=ffwwqq] (6,1.8) circle (2.5pt);
\draw [fill=ffwwqq] (5.7,2.7) circle (2.5pt);
\draw[color=black] (3.5,4) node {\large $D_i$};
\draw [fill=uuuuuu] (7.13,1.47) circle (1.75pt);
\draw[color=uuuuuu] (7.4,1.5) node {\large $x$};
\draw[color=qqwuqq] (4.4,1.9) node {\large $\pi$};
\end{scriptsize}
\end{tikzpicture}
    \caption{We start with $x = b$ and slide the point in two continuous motions. First we move it along $bp_i$ until it reaches the boundary of $D_i$, and then along this boundary until $\angle q_2 q_3 x = \pi$.}
    \label{fig:case 2b}
\end{figure}
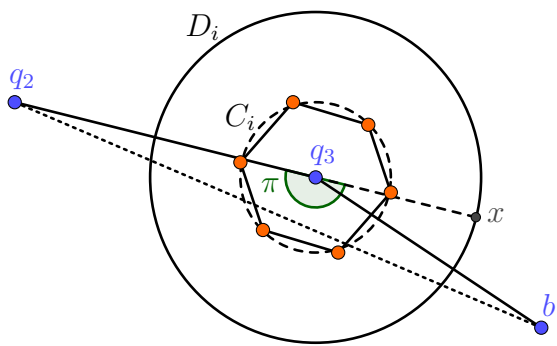

In both subcases (of Case~2), we have shown that the line $q_1b$ is disjoint from the circle $C_i$. Consequently, $v$ is in the interior of the convex quadrilateral $\mathrm{conv}\{q_1,q_2, q_3, b\}$. Since $\mathrm{conv}\{q_1,q_2, q_3, b\}\subset \mathrm{conv}(H)$, then $v$ is in the interior of $\mathrm{conv}(H)$.
\end{itemize}

In both cases, we have found a point $v\in B$ in the interior of $\mathrm{conv}(H)$, contradiction.
\end{proof}

The proof of \Cref{thm:bichromatic} generalizes and yields a trade-off between $k$ and $s$, for all $s\geq 6$.

\begin{theorem}\label{thm:bichromatic+}
For every integer $\lambda\geq 3$, Breaker wins the bichromatic Maker-Breaker game
with $1:2\lambda$ bias if $k\geq 3\lceil 2\lambda/(\lambda-2)\rceil-1$.
In particular, if $k\geq 17$ for $\lambda =3$;
$k\geq 11$ for $\lambda\in \{4,5\}$; and
$k\geq 8$ for $\lambda\geq 6$.
\end{theorem}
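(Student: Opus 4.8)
The plan is to rerun the argument of \Cref{thm:bichromatic} with the perturbed regular hexagon replaced by the perturbed regular $\lambda$-gon, which is exactly Breaker's strategy as already described for $s=2\lambda$; since a $k$-hole contains a $k'$-hole for every $k'\le k$, we may assume $k=3\lceil 2\lambda/(\lambda-2)\rceil-1$. The first step is the $\lambda$-analogue of property $(\star)$: if $p_i,p_x,p_y\in M$ satisfy $\angle p_xp_ip_y>2\pi/\lambda$ and Maker placed $p_x$ before both $p_i$ and $p_y$, then some Breaker point on the circle $C_i$ lies in the angular domain $\angle p_xp_ip_y$. When $\lambda$ is odd, all gaps between consecutive Breaker points on $C_i$ equal the central angle $2\pi/\lambda$, so any angular domain of measure exceeding $2\pi/\lambda$ contains such a point (strictly, by general position). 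When $\lambda$ is even, the $\varepsilon_i/j$-perturbation is handled exactly as in the $\lambda=6$ case, the only changes being $\pi/3\mapsto 2\pi/\lambda$ and $q_6^0\mapsto q_\lambda^0$, using $\varepsilon_i<\frac{\pi}{10\lambda}<2\pi/\lambda$.

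The genuinely new ingredient is the combinatorics that extracts the configuration $q_1,q_2,q_3,b$ from a $(k-1)$-hole, which now requires a larger hole. Put $m=\lceil 2\lambda/(\lambda-2)\rceil$, and note $m\ge 3$ for every integer $\lambda\ge3$. In a convex $t$-gon the interior angles are $<\pi$ and sum to $(t-2)\pi$, so if $j$ of them are $\le 2\pi/\lambda$ then $(t-2)\pi<j\cdot\tfrac{2\pi}{\lambda}+(t-j)\pi$, which rearranges to $j<2\lambda/(\lambda-2)$, hence $j\le m-1$. Exactly as in \Cref{thm:bichromatic}, it suffices to show that at the end of each of Breaker's moves every $(k-1)$-hole $H\subset M$ has a point of $B$ in its interior, which rules out a $k$-hole in the next round; so suppose not, and take the first round where this fails, with a witnessing $(k-1)$-hole $H$. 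Since $k-1\ge 3m-2$ and at most $m-1$ vertices of $\conv(H)$ have interior angle $\le 2\pi/\lambda$, deleting the $m-1$ vertices of smallest interior angle leaves at least $(k-1)-(m-1)\ge 2m-1$ vertices, each of interior angle $>2\pi/\lambda$, and these lie in at most $m-1$ arcs of $\partial\conv(H)$; hence some arc contains at least $\lceil(2m-1)/(m-1)\rceil\ge 3$ of them. Let $q_1,q_2,q_3$ be three consecutive such vertices with $q_3$ the last on its arc, and let $b$ be the vertex of $H$ following $q_3$ on $\partial\conv(H)$.

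From here the argument is that of \Cref{thm:bichromatic} with $\pi/3$ replaced by $2\pi/\lambda$ throughout, split according to which of $q_1,q_2,q_3$ Maker placed last. If it is $q_2$, put $p_i=q_2$ and apply $(\star)$ to the interior angle $\angle q_1q_2q_3>2\pi/\lambda$ (valid since $q_1,q_3$ precede $q_2$), getting $v\in B\cap C_i$ in that domain; as $q_1,q_3$ precede $p_i$, the radius $r_i$ was chosen so that $C_i$ misses the line $q_1q_3$, so $v$ lies in the ear $\Delta q_1q_2q_3\subset\conv(H)$. If it is, say, $q_3$, put $p_i=q_3$ and apply $(\star)$ to $\angle q_2q_3b>2\pi/\lambda$ with whichever of $q_2,b$ Maker placed earlier as $p_x$, getting $v\in B\cap C_i$ in that domain; if Maker placed $b$ before $q_3$, then $C_i$ misses the line $q_2b$, and otherwise $b\notin D_i$ and the sliding argument of Case~2b of \Cref{thm:bichromatic} (move $x$ from $b$ along $bp_i$ to $\partial D_i$, then along $\partial D_i$ until the angle at $q_3$ becomes $\pi$, reaching $c_i(q_2)$) shows $C_i$ misses the line $q_1b$; either way $v\in\conv\{q_1,q_2,q_3,b\}\subset\conv(H)$. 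In all cases $v\in B$ lies in the interior of $\conv(H)$, a contradiction. Finally one reads off the thresholds: $\lambda=3$ gives $m=6$ and $k\ge17$; $\lambda\in\{4,5\}$ gives $m=4$ and $k\ge11$; and $2\lambda/(\lambda-2)\le 3$ precisely when $\lambda\ge6$, so $m=3$ and $k\ge8$ for all $\lambda\ge6$.

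The main obstacle is not any single hard step but confirming that Breaker's metric choices — the bound on $\varepsilon_i$, the radii $r_i<R_i$, the disk replacements guaranteeing $b\notin D_i$, and the auxiliary points $c_i(a)$ — do for a general $\lambda$ exactly what they did for $\lambda=6$, since the strategy was written for $s=2\lambda$ but its consequences were spelled out only in the hexagon case; concretely one must re-verify $(\star)$ and the disjointness of $C_i$ from each of the lines $q_1q_3$, $q_2b$, $q_1q_2$, $q_1b$, and $q_1c_i(q_2)$ invoked above. I expect this to be routine once the angle-sum bound $j<2\lambda/(\lambda-2)$ and the arc-pigeonhole producing $q_1,q_2,q_3,b$ are in hand, so those two are the real content.
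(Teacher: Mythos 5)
Your proof is correct and follows essentially the same strategy as the paper: re-run the proof of \Cref{thm:bichromatic} with $\lambda$-gons and $2\pi/\lambda$ in place of hexagons and $\pi/3$, using an angle-sum pigeonhole to bound the number of small interior angles by $\lceil 2\lambda/(\lambda-2)\rceil-1$ and an arc-pigeonhole to extract three consecutive vertices $q_1,q_2,q_3$ with large interior angles followed by a fourth vertex $b$, then the identical case analysis. Your write-up is in fact somewhat more explicit than the paper's: you spell out the rearrangement giving $j<2\lambda/(\lambda-2)$ and the computation $\lceil(2m-1)/(m-1)\rceil\ge 3$, you note that the odd-$\lambda$ case of $(\star)$ is immediate since no perturbation is needed, and you handle all $\lambda\ge 3$ uniformly whereas the paper's proof nominally restricts to $\lambda\in\{3,4,5,6\}$ (for $\lambda>6$ the threshold is $k\ge 8$ as for $\lambda=6$, so nothing new is needed, but you say so). Your closing worry about re-verifying the metric choices is unfounded: the strategy in \Cref{subsec:Breaker} was already written for general $\lambda$, so the choices of $\varepsilon_i$, $r_i<R_i$, the disk replacement rule, and the points $c_i(a)$ already do what is needed; only the verification of $(\star)$ was spelled out for $\lambda=6$, and your observation that it carries over with $\pi/3\mapsto 2\pi/\lambda$ and $q_6^0\mapsto q_\lambda^0$ is exactly what the paper intends.
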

\begin{proof}
Let $\lambda\in \{3,4,5,6\}$ and assume without loss of generality that $k=3\lceil 2\lambda/(\lambda-2)\rceil-1$. We prove that at the end of Breaker's move, the interior of every $(k-1)$-hole formed by $M$ contains some point in $B$. This implies that Maker cannot create a $k$-hole in the next round.

Breaker follows the strategy described above for $s=2\lambda$. That is, Breaker places points at the vertices of a regular $\lambda$-gon inscribed in some circles around Maker points. The central angle between two consecutive vertices of a regular $\lambda$-gon is $2\pi/\lambda$.   The supplementary angle of $2\pi/\lambda$ is $\pi-2\pi/\lambda=\frac{\lambda-2}{\lambda}\pi$. By the pigeonhole principle, at most $\lceil 2\lambda/(\lambda-2)\rceil-1$ supplementary angles exceed $\frac{\lambda-2}{\lambda}\pi$, so at most this many interior angles are less than $2\pi/\lambda$ in any convex polygon.

For the sake of contradiction, consider the first round in which, at the end of Breaker's move, there is a $(k-1)$-hole $H\subset M$ whose convex hull does not contain any point from $B$.
The $\lceil 2\lambda/(\lambda-2)\rceil-1$ vertices of $\mathrm{conv}(H)$ with the smallest interior angles decompose the boundary of $\mathrm{conv}(H)$ into closed Jordan arcs. By the pigeonhole principle, one of the arcs contains at least 3 points of $H$ in its interior. Denote by $q_1, q_2,q_3$ three consecutive vertices in the interior of this arc, preceded and succeeded by vertices $a$ and $b$, respectively. Then the interior angles at $q_1,q_2,q_3$ are each at least $2\pi/\lambda$.

The proof is completed by a case distinction identical to the one in the proof of \Cref{thm:bichromatic}, with regular $\lambda$-gons and $2\pi/\lambda$ in place of hexagons and $\pi/3$.
\end{proof}

\subsection{One-Round Bichromatic Game with Bias in Favor of Maker}
\label{ssec:HalesJewett}

In contrast to \Cref{obs:bichrom}, we show that Maker wins the one-round bichromatic game if she has a bias. The analysis of Maker's winning strategy uses the density Hales-Jewett theorem by Furstenberg and Katznelson~\cite{FK91} (see also ~\cite{DKT14,Gow10,Polymath9} for simplified proofs). In the statement of the theorem below, we use the notation $[t]=\{1,2,\ldots, t\}$, where $[t]^d$ is a section of the $d$-dimensional integer lattice $\mathbb{Z}^d$.

\begin{theorem}[Density Hales-Jewett Theorem~\cite{FK91}]\label{thm:HalesJewett}
For every integer $t\geq 2$ and every $\delta\in (0,1]$, there exists an integer $d_0$ with the following property. If $R\subseteq [t]^{d_0}\subset \mathbb{Z}^{d_0}$ and $|R|/t^{d_0}\geq \delta$, then $R$ contains $t$ collinear points that form a combinatorial line, which means that we can form a sequence form the $t$ points where each coordinate is either increasing, or constant.
\end{theorem}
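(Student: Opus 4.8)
The plan is to prove the theorem by a \emph{density-increment} argument, using the density Hales--Jewett statement for alphabet size $t-1$ as the engine of an induction on $t$. This is a deep result, so I only sketch the strategy: the original proof of Furstenberg and Katznelson~\cite{FK91} is ergodic-theoretic (a multiple-recurrence theorem for IP-systems of measure-preserving transformations), whereas the combinatorial line of argument in \cite{DKT14,Gow10,Polymath9}, which I follow here, is self-contained but long to carry out in full.

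First I would set up the induction on the alphabet size $t$ (for bookkeeping one actually proves a slightly stronger ``colored'' version, but I suppress this). The case $t=1$ is vacuous, and $t=2$ is essentially Sperner's theorem: if $R\subseteq\{1,2\}^{d_0}$ has $|R|\geq \delta\, 2^{d_0}$ and $d_0$ is large enough that $\delta\,2^{d_0}>\binom{d_0}{\lfloor d_0/2\rfloor}$, then $R$, viewed as a family of subsets of $[d_0]$, is not an antichain, hence contains some $A\subsetneq B$, which is a combinatorial line. So assume $t\geq 3$ and that the theorem holds for alphabet size $t-1$, with some dimension function $d_0=D_{t-1}(\delta)$.

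The heart of the argument is a dichotomy for a combinatorial-line-free set $R\subseteq[t]^n$ of density $\delta$, where density is measured in the \emph{equal-slices} probability measure on $[t]^n$ (the convenient choice, since it transforms well both under passing to combinatorial subspaces and under ``merging'' two alphabet symbols into one). The dichotomy says: either $R$ already contains a combinatorial line --- and we are done --- or $1_R$ correlates nontrivially with the indicator of a set that is \emph{insensitive} to exchanging two fixed symbols, i.e.\ a set whose membership does not change when one relabels some coordinates of a word back and forth between, say, the values $t-1$ and $t$. In the second case one ``collapses'' those two symbols: the insensitive set then descends to a subset of a cube over a $(t-1)$-letter alphabet, to which one applies the inductive hypothesis DHJ$_{t-1}$ to extract a large combinatorial subspace; pulling this subspace back and using the correlation, $R$ is forced to have density at least $\delta+c(\delta,t)$ on it, for some explicit $c(\delta,t)>0$.

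Finally one iterates. Each application of the dichotomy either produces the desired combinatorial line or increases the density by at least $c(\delta,t)$ (and $c$ may be taken nonincreasing in $\delta$), so since density cannot exceed $1$, after at most $O(1/c(\delta,t))$ steps a combinatorial line must appear. Every step costs a large but controlled loss of dimension --- from invoking DHJ$_{t-1}$ and from restricting to a combinatorial subspace --- so it suffices to take $d_0$ a sufficiently fast-growing (tower-/Ackermann-type) function of $t$ and $1/\delta$. The main obstacle, carrying essentially all of the technical weight, is making the dichotomy rigorous: showing that a dense line-free set must correlate with a low-complexity (insensitive) set, and then turning that correlation into an honest density increment on a combinatorial subspace while keeping everything compatible with the equal-slices measure and not leaking mass into degenerate slices. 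Keeping the three operations ``merge two symbols'', ``restrict to a subspace'', and ``change reference measure'' mutually consistent is exactly where the bulk of the work lies, even though the skeleton above is short.
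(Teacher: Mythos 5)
The paper does not prove this statement: it is the Density Hales--Jewett theorem, imported as a black box with a citation to Furstenberg and Katznelson~\cite{FK91} (and the paper also points to the simplified combinatorial proofs in~\cite{DKT14,Gow10,Polymath9}). There is therefore no ``paper's own proof'' to compare yours against; the theorem is used later in the proof of Theorem~\ref{thm:grid} but never re-derived.

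That said, your sketch is a faithful high-level account of the Polymath/Gowers-style combinatorial proof: induction on the alphabet size $t$ with Sperner's theorem as the base case $t=2$, working in the equal-slices measure so that densities behave well under passage to combinatorial subspaces and under merging two symbols, the dichotomy ``line or correlation with an $ij$-insensitive set,'' the collapse to a $(t-1)$-letter alphabet to invoke DHJ$_{t-1}$ and extract a subspace carrying a density increment, and the termination argument since density cannot exceed $1$. You correctly flag that the genuinely hard part is making the dichotomy rigorous and keeping the measure changes, symbol merges, and subspace restrictions consistent; a full proof along these lines runs to tens of pages. Since the paper treats the theorem as a known external ingredient, reproducing such a proof would be out of scope here --- the appropriate move (and the one the paper takes) is simply to cite it.
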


Several remarks are in order.
(1) For our purposes, it would make sense to replace combinatorial lines with geometric lines (where each coordinate is increasing, decreasing or constant); the smallest $t$ with this weaker property is known as the Moser number~\cite{Moser70,Polymath2010}. However, the Density Hales-Jewett theorem gives the current best bounds for Density Moser numbers,
so we use this theorem directly.
(2) The connection between the Density Hales-Jewett theorem and Maker's strategy is a simple and well-know idea: a suitable generic projection of $[t]^d$ onto a plane. This idea was previously used (among others) by Alon~\cite{Alon12} to obtain a super-linear lower bound for planar (weak) $\varepsilon$-nets for lines, and even earlier by Pach, Tardos and T\'{o}th \cite{PachTT09} to show that the chromatic number of planar point-line incidence hypergraphs can be arbitrarily large.
(3) In fact, our problem is strongly related to the weak $\varepsilon$-net problem for convex sets in the plane in the special case that Maker's point set $M$ is in convex position. A key difference between these problems is that a (weak) $\varepsilon$-net for $M$ can include points from $M$, but in the Erd\H{o}s-Szekeres game, Breaker has to choose a set $B$ of points such that $M\cap B=\emptyset$ and $M\cup B$ is in general position.

Here, beyond a generic linear transformation, we will also need to ``bend'' lines of given directions in a suitable way.
In particular, our contribution is an intricate perturbation that transforms collinear $t$-sets into $t$-holes. We give a short summary before describing Maker's strategy: we first create three ``bundles'' of collinear lines, the bundles are each almost parallel to three distinct directions (at angular distance $2\pi/3$ apart), and then perturb each bundle independently to bundles of parabolas. Each $t$-set on a parabola in a bundle forms a long and thin $t$-hole. Importantly, a typical Maker point $p$ participates in such $t$-holes in three different directions, so Breaker needs at least two points in some small $\delta$-neighborhood of $p$,
or else $p$ can contribute to any $t$-hole in one of the three bundles.
Furthermore, any Breaker point that is not in a $\delta$-neighborhood of a Maker point can block at most two such long and thin $t$-holes. Finally, an easy averaging argument shows that, with $1:(2-\varepsilon)$ bias, sufficiently many Maker points ``survive'' (in the sense that they each have at most one Breaker point in their $\delta$-neighborhood) and can form a $\frac{t+1}{2}$-hole.

We make an easy observation that we use for Maker's strategy.
\begin{observation}
\label{obs:grid}
Let $d,k\in \mathbb{N}$, $t=2k-1$ and consider $[t]^d\subset \mathbb{Z}^d$.
If $\ell_1$ and $\ell_2$ are distinct lines in $\mathbb{R}^d$ that each contain $t$ points of $[t]^d$, then $\ell_1\cap \ell_2$ is either empty or a point in $[t]^d$.
\end{observation}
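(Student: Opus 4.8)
The plan is to argue that any line containing $t=2k-1$ points of $[t]^d$ must be of a very restricted form, and that two such lines cannot meet at a fractional point of the grid.

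\medskip

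\textbf{Step 1: Characterize lines with $t$ grid points.} First I would observe that a line $\ell$ in $\mathbb{R}^d$ containing $t$ points of $[t]^d=\{1,\dots,t\}^d$ must contain them as an arithmetic progression: if $q_1,\dots,q_t$ are the points on $\ell$ in order, then $q_{i+1}-q_i$ is a constant vector $v=(v_1,\dots,v_d)$. Since all coordinates of all $q_i$ lie in $\{1,\dots,t\}$, each coordinate $v_m$ satisfies $|v_m|\cdot(t-1)\le t-1$, hence $v_m\in\{-1,0,1\}$. Moreover, if $v_m\ne 0$, then the $m$-th coordinates of $q_1,\dots,q_t$ must run through all of $1,2,\dots,t$ (in increasing or decreasing order), so in particular $\ell$ contains a point whose $m$-th coordinate is $1$ and a point whose $m$-th coordinate is $t$; and if $v_m=0$ the $m$-th coordinate is constant on $\ell$. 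This is exactly the ``combinatorial line'' structure (up to reversing the order of some coordinates).

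\medskip

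\textbf{Step 2: Intersection point lies on the grid.} Now suppose $\ell_1\ne\ell_2$ are two such lines and $p\in\ell_1\cap\ell_2$ is their (unique, since distinct lines meet in at most a point) intersection point. I want to show $p\in[t]^d$, i.e. every coordinate $p_m$ is an integer in $\{1,\dots,t\}$. Fix a coordinate $m$. If $\ell_1$ has zero $m$-th direction-component, then $p_m$ equals the constant $m$-th coordinate of $\ell_1$, which is an integer in $[t]$, and we are done for that coordinate; likewise if $\ell_2$ has zero $m$-th component. The remaining case is that both $\ell_1$ and $\ell_2$ have nonzero ($\pm1$) $m$-th component. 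Parametrize $\ell_1$ by its $m$-th coordinate: as the point moves along $\ell_1$ from the grid point with $m$-th coordinate $1$ to the one with $m$-th coordinate $t$, every coordinate changes monotonically by a total of $0$ or $t-1$; similarly for $\ell_2$. The key point is that the intersection $p$, projected to the $(m,m')$-plane for any other coordinate $m'$, must satisfy both affine relations; I would write out the linear system and show that because all direction components are in $\{-1,0,1\}$ and the lines each pass through two lattice points with extreme $m$-th coordinates, the solution $p_m$ is forced to be an integer, from which all other $p_{m'}$ are integers too, and they lie in $[t]$ because $p$ lies on the segment of $\ell_1$ between its first and last grid points (any value of $p_m$ outside $[1,t]$ would put $p$ off that segment, but then $p$ could not lie on the segment of $\ell_2$ either unless the lines were parallel in the $m$-th coordinate, contradicting distinctness in a way I'd need to rule out).

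\medskip

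\textbf{Main obstacle.} The delicate part is precisely this last case analysis: showing that when two combinatorial-type lines genuinely cross (rather than being parallel), the crossing happens at an integer point and not strictly between two consecutive grid points on one of them. I expect the cleanest route is to reduce to two coordinates at a time and use that each line, restricted to a coordinate pair where both have nonzero components, looks like one of the lines $x_{m'}=\pm x_m+c$ with integer $c$; two such lines $x_{m'}=\varepsilon_1 x_m + c_1$ and $x_{m'}=\varepsilon_2 x_m+c_2$ with $\varepsilon_1,\varepsilon_2\in\{\pm1\}$ either coincide, are parallel, or meet where $(\varepsilon_1-\varepsilon_2)x_m=c_2-c_1$, forcing $x_m=(c_2-c_1)/2$ — which is a half-integer, and here is where $t=2k-1$ being \emph{odd} enters: I must show this half-integer case cannot actually occur for lines that each carry a full run of $t$ values, or that it still lands in $[t]$; in fact one checks that a combinatorial line through $[t]^d$ with $t$ odd contains the center point $\left(\tfrac{t+1}{2},\dots\right)$-type coordinate only in the zero-component coordinates, and the parity obstruction shows two distinct such lines can only share that forced integer crossing, completing the proof.
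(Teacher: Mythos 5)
Your proposal takes essentially the same route as the paper: reduce the direction vectors to having coordinates in $\{-1,0,1\}$, project to a 2-dimensional coordinate plane, and use the oddness of $t=2k-1$ to force the intersection to be a lattice point. That is the right skeleton, and your Step 1 is correct and complete.

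However, the last step is not actually closed, and one assertion in it is wrong. You write that the crossing of $x_{m'}=\varepsilon_1 x_m+c_1$ and $x_{m'}=\varepsilon_2 x_m+c_2$ with $\varepsilon_1\neq\varepsilon_2$ has $x_m=(c_2-c_1)/2$ ``which is a half-integer'' and then try to argue that this case ``cannot actually occur.'' In fact it does occur, and the point is that it is \emph{not} a half-integer. Once a line carries all $t$ grid points with $v_m,v_{m'}\in\{-1,1\}$, its $(m,m')$-projection sweeps the full range $\{1,\dots,t\}$ in both coordinates, so it is one of exactly two diagonals of $[1,t]^2$: either $x_{m'}=x_m$ (so $\varepsilon=1$, $c=0$) or $x_{m'}=-x_m+(t+1)$ (so $\varepsilon=-1$, $c=t+1$). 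Thus $c$ is determined by $\varepsilon$; when $\varepsilon_1\neq\varepsilon_2$ the crossing is at $x_m=x_{m'}=\frac{t+1}{2}=k$, which is an integer in $[t]$ precisely because $t=2k-1$ is odd. This is exactly the paper's computation. Your closing sentence about a ``parity obstruction'' does not cleanly substitute for this; you should replace it with the explicit identification $c\in\{0,t+1\}$ and the resulting intersection at $(k,k)$. You should also state the choice of projection plane more carefully: pick a coordinate $i$ in which the two direction vectors are not proportional (this exists since $\ell_1\neq\ell_2$ and they meet), and then for each other coordinate $j$ project to the $(i,j)$-plane; the cases where one or both of the projected lines are axis-parallel are immediate, leaving only the two-diagonals case above.
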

\begin{proof}
Let $\ell_1$ and $\ell_2$ be distinct lines spanned by two collinear $t$-sets in $[t]^d$, and assume that $\ell_1 \cap \ell_2\neq \emptyset$. Then $\ell_1$ and $\ell_2$ are not parallel, and so their (unit) direction vectors differ in some coordinate $i\in [d]$. Consider the orthogonal projection of $\ell_1$ and $\ell_2$ into the coordinate plane spanned by the standard basis vectors $\mathbf{e}_i$ and $\mathbf{e}_j$, for some $j\in [d]\setminus \{i\}$. The projection of $[t]^d$ is the 2-dimensional grid $[t]^2$; and the projection of $\ell_1$ (resp., $\ell_2$) is either an axis-parallel grid line or a diagonal of the square $[1,t]^2$. In any case, the orthogonal projection of $\ell_1\cap \ell_2$ is in $[t]^2$ (e.g., if both projections are diagonals of $[1,t]^2$, the intersection is $(\frac{t+1}{2},\frac{t+1}{2})=(k,k)\in [t]^2$ since $t=2k-1$). Consequently, all $d$ coordinates of $\ell_1\cap \ell_2$ are integers in $[t]$. This completes the proof of the claim.
\end{proof}

\begin{theorem}\label{thm:grid}
    For every $\varepsilon>0$ and integer $k\geq 3$, Maker wins in the bichromatic one-round Maker-Breaker game with bias $1:(2-\varepsilon)$.
\end{theorem}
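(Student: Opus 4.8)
The plan is to make the summary preceding the theorem statement precise. I would set $t=2k-1$ so that $\frac{t+1}{2}=k$, fix $\delta\in(0,1]$ small enough that an averaging argument will go through (roughly $\delta$ comparable to $\varepsilon$), and let $d=d_0(t,\delta)$ be the dimension supplied by the Density Hales--Jewett Theorem (\Cref{thm:HalesJewett}). Maker's point set $M$ will be a perturbed generic planar image of the grid $[t]^d$. More precisely, I would first pick a generic linear map $\pi\colon\mathbb{R}^d\to\mathbb{R}^2$ so that distinct lines spanned by collinear $t$-sets of $[t]^d$ have distinct, non-vertical images with pairwise distinct slopes, and so that the image points are in general position. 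By \Cref{obs:grid}, any two such grid lines meet in $\emptyset$ or in a grid point, so after projection two image-lines meet in at most one image point; this is what lets the later bending be done consistently. The key geometric input is that the direction set of combinatorial-line images, being finite, can be partitioned into three ``bundles'' whose directions all lie within a tiny angular window of one of three reference directions $2\pi/3$ apart — actually one only needs three pairwise non-parallel reference directions, so choose a generic $\pi$ making this automatic.

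Next I would describe the perturbation that turns collinear $t$-sets into $t$-holes. For each of the three bundles independently, apply a bending: replace the part of the configuration ``responsible for'' that bundle by bending every line of the bundle into a convex parabolic arc that is extremely flat (curvature $\to 0$), all bending in the same transverse direction, so that the $t$ points lying on such an arc become the vertices of a very thin convex $t$-gon. Because the arcs are flat enough and the three bundles bend in essentially independent transverse directions, one can ensure: (i) the whole set $M$ is in general position; (ii) for every bundle and every line $\ell$ of that bundle, the $t$ grid-image points on $\ell$ form a $t$-hole with respect to $M$ — i.e.\ the thin convex $t$-gon they span contains no other point of $M$ (this uses that any point of $M$ off $\ell$ is at bounded distance, while the $t$-gon can be made arbitrarily thin); and (iii) a ``typical'' Maker point $p$ (one avoiding the $O(1)$-many exceptional grid points where two grid-lines cross, and lying in the interior of the grid so it sits on a full combinatorial line in each of the three bundle-directions) participates in a $t$-hole in each of the three bundles, and these three $t$-holes only overlap near $p$ — precisely, there is a radius $\rho(p)>0$ such that outside the $\rho(p)$-ball around $p$ the three thin $t$-gons through $p$ are pairwise disjoint. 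Set $\delta$-neighborhood of $p$ to be this $\rho(p)$-ball (or shrink to a common radius).

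Now the combinatorial/averaging part. Maker plays $M$ as above with $|M|=t^d$, and the $\Theta(t^d)$ ``typical'' interior points $M'$ — $|M'|\ge(1-o_d(1))t^d$, certainly $\ge(1-\delta/3)t^d$ for $d$ large — each need \emph{two} Breaker points in their $\delta$-ball to be neutralized in all three directions, because a single Breaker point in the $\delta$-ball of $p$ blocks at most one of the three $t$-holes through $p$ (the three thin polygons are disjoint outside that ball, and inside that ball... here I would note the polygons all contain $p$, so I should instead say: a Breaker point $b$ anywhere can lie in the interior of at most two of Maker's thin $t$-holes, since three such holes through a common point would force $b$ to be ``between'' three almost-parallel-per-bundle arcs, which by the disjointness-outside-$\rho$ property and a direction count is impossible). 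Meanwhile any Breaker point \emph{not} in any $\delta$-ball lies in the interior of at most two of the thin $t$-holes total (again a direction/disjointness count across the three bundles). Breaker has $(2-\varepsilon)t^d$ points. If Breaker could destroy every $t$-hole, then every one of the $|M'|$ good points has $\ge 2$ Breaker points ``assigned'' to it (in its $\delta$-ball), but double counting Breaker points against good points — each Breaker point is in the $\delta$-ball of at most one good point (the balls are disjoint for distinct good points, after shrinking to a uniform $\delta$) — gives $(2-\varepsilon)t^d\ge 2|M'|\ge 2(1-\delta/3)t^d>(2-\varepsilon)t^d$ for $\delta<\tfrac{3\varepsilon}{2}$, a contradiction. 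Hence some thin $t$-hole $\ell$ survives: no Breaker point in the interior of its convex $t$-gon, and by construction no Maker point either. The $t=2k-1$ points on $\ell$ are in convex position; take every other one to get $k$ points whose convex $k$-gon sits inside the thin $t$-gon, hence contains no point of $M\cup B$ other than these $k$ — wait, it may contain the skipped $t$-hole vertices. So instead I would argue directly: since the surviving thin $t$-gon contains no point of $M\cup B$ in its interior, and $t=2k-1\ge k$, any $k$ consecutive vertices along it span a convex $k$-gon that also contains no point of $M\cup B$ in its interior except its own vertices (the interior of the $k$-gon is contained in the interior of the $t$-gon together with some boundary, and any other point of $M$ on this arc is one of the $t$-gon's vertices, i.e.\ on the boundary of the $t$-gon but could be inside the $k$-gon). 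Cleanest: require the bending so that the $t$ points on an arc are not merely convex but that \emph{any} $k$ of them form a $k$-hole, i.e.\ the arc is so flat that for any $k$-subset the convex hull contains none of the other $t-k$ arc points — impossible since they're collinear-ish... Therefore the right statement is: the $t$ arc points form a $t$-hole and a $t$-hole with $t\ge k$ trivially contains a $k$-hole as a sub-configuration only if we allow using its own points; here $k$ consecutive vertices of a $t$-hole $H$ do form a $k$-hole of $M$ precisely when the chord cutting them off contains no point of $M$ on the far side, which holds because $\operatorname{conv}(H)\cap M=H$ and the only points of $H$ on the far side of that chord are among the $k$ consecutive vertices themselves. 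So Maker wins.

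The main obstacle is making the three-bundle bending rigorous and quantitative: one must choose the curvatures of the three families of parabolic arcs small enough, and in appropriately transverse directions, so that simultaneously (a) the full set stays in general position, (b) every arc's $t$ points form a genuine $t$-hole (no stray Maker point inside the thin polygon — this needs the polygon's width $\ll$ the minimum distance from the arc to the off-arc points of $M$, uniformly), and (c) the ``a Breaker point hits $\le 2$ of the thin holes'' and ``$\delta$-balls of distinct good points are disjoint'' properties hold with a single uniform $\delta$. The order-of-quantifiers bookkeeping (first $\varepsilon,k$; then $t=2k-1$ and $\delta\sim\varepsilon$; then $d$; then $\pi$ generic; then bend with curvatures depending on everything prior; then read off $\delta$) is where all the care goes, but each individual requirement is an open condition, so a careful but routine limiting argument closes it.
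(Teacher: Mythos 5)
Your high-level picture (bundle combinatorial lines into three near-parallel families, bend each family into flat parabolic arcs so collinear $t$-sets become thin $t$-holes, then double count Breaker points against Maker points) is indeed the spirit of the paper's argument, but two of your key steps break down and cannot be repaired without the extra machinery the paper builds.

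First, the three-bundle structure does not come from a generic $\pi$ on $[t]^{d_0}$. A generic linear projection of $[t]^{d_0}$ scatters the directions of all combinatorial lines roughly evenly; they will not cluster into three tight angular windows. The paper instead sets $d=3d_0$, splits the coordinates into three blocks $D_0,D_1,D_2$, and uses a tailor-made (not generic) map $\varphi=\sum_j \omega^j\circ\pi$ so that standard basis vectors of each block map near one of the three directions $\omega^j(1,0)$. Moreover, even then only the ``pure'' combinatorial lines whose wildcard coordinates lie in a single block get bundled; a combinatorial line with wildcards spread across two or three blocks has a mixed direction and is never used. You also need the $3d_0$ factorization anyway, because at the end one must apply Density Hales--Jewett within a single $[t]^{d_0}$ slice of $\mathbb A_j$-siblings.

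Second, and more seriously, the double count ``if Breaker destroys every $t$-hole then every good point $p$ has $\ge 2$ Breaker points in its $\delta$-ball'' is false. Breaker can block all three $t$-holes through $p$ with two points placed far from $p$, deep inside the long thin arcs, where they are not in the $\delta$-ball of $p$ (and possibly not in any $\delta$-ball). Your double count, which pairs Breaker points only with the unique Maker point whose $\delta$-ball contains them, silently ignores those far points entirely. The paper handles exactly this by splitting $B$ into $B^1$ (points inside some disk $D(s,\alpha/3)$) and $B^2$ (the rest), observing that each $B^2$ point lies in at most two thin $t$-holes, and then, for every $t$-hole hit by $\ge 2$ $B^2$-points, \emph{charging} it to a phantom point $B^3$ placed in a vertex disk. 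Only after this charging does the disjoint-disk double count $|M^\tau(B)|\le |B|/2\le(1-\varepsilon/2)n$ make sense, and even then one does not directly obtain a surviving $t$-hole: one gets a large set $M_j^\tau$ of Maker points that survive in a single direction $j$, restricts to a dense $\mathbb A_j$-sibling slice, and only then invokes Density Hales--Jewett to find a combinatorial $t$-line all of whose points survive. Your sketch never actually applies the DHJ theorem, and without it there is no way to upgrade ``many Maker points are individually not overwhelmed'' to ``an entire thin $t$-gon has at most one Breaker point inside it.''
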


\begin{proof}
Given $\varepsilon>0$ and an integer $k\geq 3$, we present Maker's strategy.
For the parameters $t=2k-1$ and $\delta=\varepsilon/6$, assume that \Cref{thm:HalesJewett} yields the integer $d_0$; let $d=3{d_0}$.

We partition the coordinates $\{1,\ldots,d\}$ into three subsets of equal size:
$D_0=\{1,\ldots,d_0\}$,
$D_1=\{d_0+1,\ldots,2d_0\}$, and
$D_2=\{2d_0+1,\ldots,d\}$.
For every $j\in \{0,1,2\}$, denote by $\mathbb A_j$ the $d_0$-dimensional subspace of $\mathbb{R}^d$ spanned by the standard basis vectors $\{\mathbf{e}_i:i\in D_j\}$; thus, $\mathbb{R}^d=\mathbb A_0 \oplus \mathbb  A_1 \oplus \mathbb A_2$.
It will be convenient to write a point $z\in \mathbb{R}^d$ as the vector sum of its orthogonal projections to these three subspaces: $z=z_0+z_1+z_2$ where $z_j\in \mathbb A_j$.

Fix a plane $P$ with a coordinate system $(x,y)$ and a generic linear transformation $\pi: \mathbb{R}^{d_0}\to P$ with the property that 
$\pi(\mathbf{e}_i)$ is in a small neighborhood of $(1,0)$ for every standard basis vector $\mathbf{e}_i\in \mathbb{R}^{d_0}$. For simplicity, we may even choose $\pi$ so that $\pi(\mathbf{e}_i)$ is a unit vector whose angle with $(1,0)$ is, say, at most $\pi/20$.
That is, the image of the cone $\Lambda=\{\sum_i \lambda_i\mathbf{e}_i:\lambda_i\ge 0\}$ is quite ``flat'', close to horizontal.
The direction of a combinatorial line of $[t]^{d_0}$ is always from $\Lambda$.
We also define the following perturbation in the plane $P$:
\[
\tau(x,y)=(x,y+\gamma x^2),
\]
for a sufficiently small $\gamma>0$ specified later.
Note that $\tau$ maps every nonvertical line to a parabola.
Specifically, every halfline contained in $\Lambda$ is mapped by $\tau\circ\pi$ into a flat halfparabola that barely curves.
Finally, we define $\varphi: \mathbb R^d \to P$ and $\varphi^\tau: \mathbb R^d \to P$ as follows:
\[
\varphi(z)=\sum_{j=0}^2 \omega^j \circ\pi(z_j)
\hspace{1cm}\text{ and }\hspace{1cm}
\varphi^\tau(z)=\sum_{j=0}^2 \omega^j \circ \tau\circ\pi(z_j),
\]
where $\omega$ denotes the rotation around the origin $(0,0)$ by $2\pi/3$ in the plane $P$ (note that $\omega^0$ is the identity map, $\omega^1=\omega$, and $\omega^2=\omega^{-1}$ is a rotation by $4\pi/3$).
In particular, any of $\varphi$ or $\varphi^\tau$ takes any axis-parallel positive unit vector from $\mathbb A_j$ close to $\omega^j(1,0)$ for $j=0,1,2$.
For example, the image of $\mathbf{e}_{d/2}$ is close to $\omega(1,0)=(-1/2,\sqrt 3/2)$, or the direction of the image of any unit vector from $\Lambda$ whose first $2d_0$ coordinates are close to $0$ is close to $\omega^2(1,0)=(-1/2,-\sqrt 3/2)$; see Figure \ref{fig:phi defined}.
The difference between $\varphi$ and $\varphi^\tau$ is that $\varphi$ maps lines to lines, while $\varphi^\tau$ slightly bends them.
We will not need the fact that $\varphi$ and $\varphi^\tau$ are obtained as the sum of $3$ components until later in the proof.

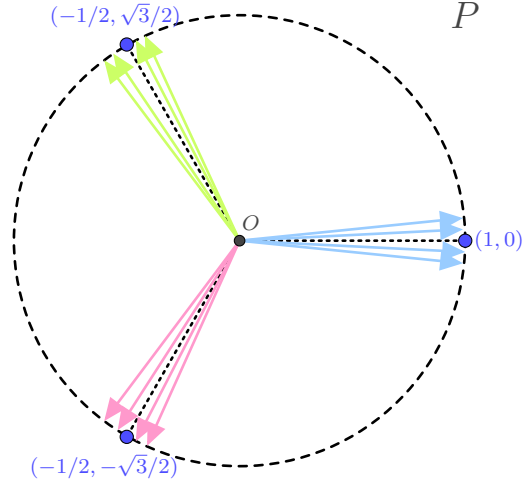
\begin{figure}
    \centering
    \begin{tikzpicture}[line cap=round,line join=round,>=triangle 45,x=3cm,y=3cm]
\clip(-1.61,-1.08) rectangle (1.72,1.09);
\draw [line width=1pt, dashed] (0,0) circle (3cm);
\draw [line width=1pt,dotted] (0,0)-- (-0.50,0.87);
\draw [line width=1pt,dotted] (0,0)-- (-0.50,-0.87);
\draw [line width=1pt,dotted] (0,0)-- (1.00,0);
\draw [->,line width=1pt,color=zzccff] (0,0) -- (0.99,0.05);
\draw [->,line width=1pt,color=zzccff] (0,0) -- (0.99,0.10);
\draw [->,line width=1pt,color=zzccff] (0,0) -- (1.00,-0.05);
\draw [->,line width=1pt,color=zzccff] (0,0) -- (0.99,-0.10);
\draw [->,line width=1pt,color=ccffww] (0,0) -- (-0.46,0.89);
\draw [->,line width=1pt,color=ccffww] (0,0) -- (-0.42,0.91);
\draw [->,line width=1pt,color=ccffww] (0,0) -- (-0.56,0.83);
\draw [->,line width=1pt,color=ffzzcc] (0,0) -- (-0.54,-0.84);
\draw [->,line width=1pt,color=ffzzcc] (0,0) -- (-0.46,-0.89);
\draw [->,line width=1pt,color=ffzzcc] (0,0) -- (-0.60,-0.80);
\draw [->,line width=1pt,color=ccffww] (0,0) -- (-0.60,0.80);
\draw [->,line width=1pt,color=ffzzcc] (0,0) -- (-0.41,-0.91);
\begin{scriptsize}
\draw [fill=ududff] (-0.50,0.87) circle (2.5pt);
\draw[color=ududff] (-0.55,1) node {$(-1/2, \sqrt{3}/2)$};
\draw [fill=ududff] (-0.50,-0.87) circle (2.5pt);
\draw[color=ududff] (-0.6,-1) node {$(-1/2, -\sqrt{3}/2)$};
\draw [fill=ududff] (1.00,0) circle (2.5pt);
\draw[color=ududff] (1.15,0) node {$(1,0)$};
\draw [fill=uuuuuu] (0,0) circle (2pt);
\draw[color=uuuuuu] (0.05,0.08) node {$O$};
\draw[color=uuuuuu] (1,1) node {\Large $P$};
\end{scriptsize}
\end{tikzpicture}
    \caption{$\varphi$ maps the standard basis vectors of $\mathbb{A}_0$, $\mathbb{A}_1$ and $\mathbb{A}_2$ respectively to a small neighborhood of $\omega^0$ (blue), $\omega^1$ (green) and $\omega^2$ (pink). Furthermore, it maps an arbitrary vector $z$ to the sum of the respective rotations of the projections on $P$ of its projections on the three subspaces $\mathbb{A}_j$.}
    \label{fig:phi defined}
\end{figure}

\paragraph{Maker's move.} Now we are ready to describe Maker's move.
Maker places a set $M^\tau$ of $n=(2k-1)^d$ points in the plane given by $M^\tau=\varphi^\tau([t]^d)$, that is, each point of $M^\tau$ is the $\varphi^\tau$-image of a point from the grid $[t]^d\subset \mathbb R^d$.
By the definition of $\varphi^\tau$, the points of $M^\tau$ are very close to the integer lattice generated by the vectors $(1,0)$ and $\omega(1,0)$, with typically several points of $M^\tau$ being very close to the same point of the lattice.
Since $\pi$ is generic, $\varphi^\tau$ is injective on $[t]^d$, and so $\varphi^\tau$ gives a bijection between $[t]^d$ and $M^\tau$.

We also use in our arguments the set $M=\varphi([t]^d)$, i.e., $M$ is $M^\tau$ without the bending caused by $\tau$. As there is a bijection between the points of $M$ and $M^\tau$, we use the following notation: for every point $s=\varphi(z)$, let $s^\tau=\varphi^\tau(z)$.
Similarly, for every set $C\subset M$, let $C^\tau=\{s^\tau:s\in C\}\subset M^\tau$; this operation is consistent with the notation $M^\tau$.
Note that the points of $M$ are very close to the respective points of $M^\tau$, and they form (a section of) a lattice. Since $\pi$ is generic, a $t$-set in $M$ is collinear if and only if its pre-image is a collinear $t$-set in $[t]^d$; and any point in $\mathbb{R}^2\setminus M$ is incident to at most two lines spanned by collinear $t$-sets in $M$.

We specify $\gamma>0$ as follows. Let $L$ be the set of lines spanned by collinear $t$-sets in $M$, and $X$ the set of intersection points of any two distinct lines in $L$. Let $\alpha$ be the minimum distance between a point $x\in X$ and a line $\ell\in L$ with $x\notin \ell$. Then the disks $D(x,\alpha/3)$ of radius $\alpha/3$ centered at $x\in X$ are pairwise disjoint. For any $\beta>0$ and line $\ell\in L$, let $N(\ell,\beta)$ denote the $\beta$-neighborhood of $\ell$, that is $N(\ell,\beta)=\{x\in \mathbb{R}^2: \mathrm{dist}(\ell,x)<\beta\}$. Then $N(\ell,\beta)$ is a strip parallel to $\ell$; and for any pair of nonparallel lines, $\ell_1,\ell_2\in L$, the intersection $N(\ell_1,\beta)\cap N(\ell_2,\beta)$ is a parallelogram centered at $\ell_1\cap \ell_2$. This is illustrated in Figure~\ref{fig:neighborhood parallelogram}.
However, the diameter of this parallelogram depends on the angle between $\ell_1$ and $\ell_2$.  Let $\beta\in (0, \alpha/3)$ be sufficiently small such that for any pair of nonparallel lines $\ell_1,\ell_2\in L$, we have $N(\ell_1,\beta)\cap N(\ell_2,\beta)\subset D(\ell_1\cap \ell_2,\alpha/3)$. Finally, let $\gamma>0$ be sufficiently small such that for the following two conditions are met  for every point $s\in S$ and line $\ell\in L$ with $s\in \ell$:  (a) we have $s^\tau\in N(\ell,\beta)$, and (b) the angle between $\ell$ and the tangent line of the parabola $\tau(\ell)$ at $s^\tau$ is at most $\pi/20$. This completes Maker's strategy.

\begin{figure}
    \centering
\begin{tikzpicture}[line cap=round,line join=round,>=triangle 45,x=1.5cm,y=1.5cm]
\clip(-3,-2) rectangle (3.5,2.5);
\fill[line width=1pt,color=zzffff,fill=zzffff,fill opacity=1] (-9.34,0.5) -- (-9.28,-0.5) -- (11.33,-0.5) -- (11.10,0.5) -- cycle;
\fill[line width=1pt,color=ccffww,fill=ccffww,fill opacity=1] (-4.47,7.10) -- (-3.58,7.31) -- (3.89,-6.06) -- (2.83,-5.95) -- cycle;
\fill[line width=1pt,color=yqqqyq,fill=yqqqyq,fill opacity=1] (-0.78,0.5) -- (0.22,0.5) -- (0.78,-0.5) -- (-0.22,-0.5) -- cycle;
\draw [line width=1pt,domain=-6.36:7.24] plot(\x,0);
\draw [line width=1pt,domain=-6.36:7.24] plot(\x,0.5);
\draw [line width=1pt,domain=-6.36:7.24] plot(\x,-0.5);
\draw [line width=1pt,domain=-6.36:7.24] plot(\x,{(-0.99*\x)/0.55});
\draw [line width=1pt,domain=-6.36:7.24] plot(\x,{(0.5-\x)/0.55});
\draw [line width=1pt,domain=-6.36:7.24] plot(\x,{(-0.5-\x)/0.55});
\draw [fill=uuuuuu] (0,0) circle (3pt);
\draw [fill=xdxdff] (-1.04,1.87) circle (2.5pt);
\draw [fill=ududff] (2.3,0) circle (2.5pt);
\draw [fill=xdxdff] (0.89,-1.6) circle (2.5pt);
\draw [fill=xdxdff] (-2.3,0) circle (2.5pt);
\draw[color=black] (-0.2,2.2) node {$N(\ell_1, \beta)$};
\draw[color=black] (3,0.7) node {$N(\ell_2, \beta)$};
\draw [line width=0.75pt,dotted] (0.05,0.1)-- (0.5,1.05);
\draw[color=uuuuuu] (0.52,1.2) node {$\ell_1 \cap \ell_2$};
\draw [line width=1pt, dashed] (0,0) circle (1.5cm);
\draw [line width=0.75pt,dotted] (-1,0.2)-- (-1.8,0.8);
\draw[color=uuuuuu] (-2,1) node {$D(\ell_1\cap \ell_2,\alpha/3)$};
\end{tikzpicture}
\caption{The intersection of the $\beta$-neighborhoods of the lines spanned by two collinear $t$-sets, $\ell_1$ and $\ell_2$, forms a parallelogram centered at $\ell_1 \cap \ell_2$.
    }
    \label{fig:neighborhood parallelogram}
\end{figure}
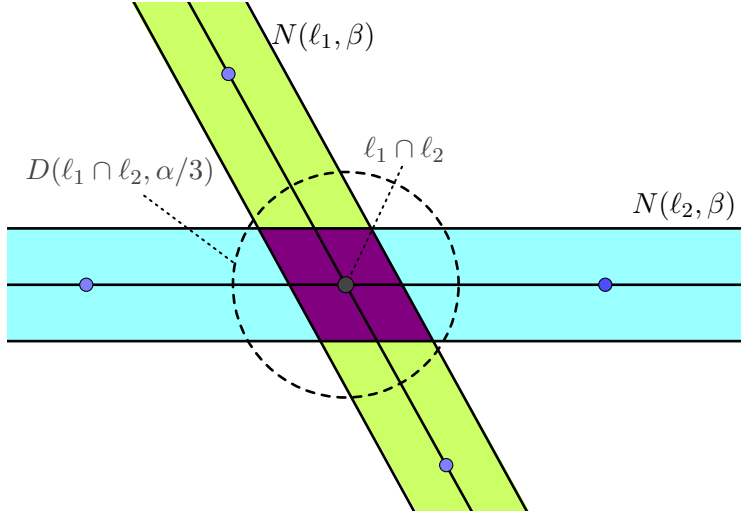

\paragraph{Properties of Maker's points.}
We say that two points $z,z'\in [t]^d$ are \emph{$\mathbb A_j$-siblings} if $z-z'\in \mathbb A_j$. For example, if the first $2d_0$ coordinates of $z$ and $z'$ are the same, then they are $\mathbb A_2$-siblings. Similarly, any $z_0+z_1+z_2$ and $z_0'+z_1+z_2$ are $\mathbb A_0$-siblings if $z_0,z_0'\in \mathbb A_0$. If two points are $\mathbb A_j$-siblings for some $j\in \{0,1,2\}$, then we call them \emph{$\mathbb A$-siblings}.
A collection of points are called \emph{$\mathbb A$-siblings} if they are pairwise $\mathbb A$-siblings. We also extend these terms to their images under $\varphi$ and $\varphi^\tau$: the images of $\mathbb A$-siblings are $\mathbb A$-siblings.
We will argue below that if a $t$-set $C\subset M$ of $\mathbb A$-siblings is collinear, then $C^\tau$ is a $t$-hole with respect to $M^\tau$.

We state a few properties of $M^\tau$. Let $C\subset M$ be a collinear $t$-set of $\mathbb A$-siblings whose pre-images that form a combinatorial line in $[t]^d$.
Then the $t$-set $C^\tau\subset M^\tau$ is contained in a parabola, as $\mathbb A$-siblings are bent in the same direction, hence $C^\tau$ is in convex position. Furthermore, $C^\tau\subset N(\ell,\beta)$, where $\ell$ is the line spanned by $C$, as $s^\tau\in N(\ell,\beta)$ for every vertex $s^\tau\in C^\tau$.
Every point $s\in M$ is contained in at least $d$ lines in $L$. For any two lines $\ell_1,\ell_2\in L$, with $s\in \ell_1\cap \ell_2$,  we have $s^\tau\in N(\ell_1, \beta)\cap N(\ell_2, \beta)$, consequently $s^\tau\in D(\ell_1 \cap \ell_2, \alpha/3)$. If $s\notin \ell$ for some line $\ell\in L$, then $\beta<\alpha/3$ yields $D(s,\alpha/3)\cap N(\ell,\beta)=\emptyset$, which implies $s^\tau\notin N(\ell,\beta)$.
Consequently, the interior of $\conv(C^\tau)$ cannot contain any point of $M^\tau$, and so $C^\tau$ is a $t$-hole. For every $j\in \{0,1,2\}$, let $\mathcal{H}_j$ denote the set of $t$-holes with respect to $M^\tau$ that correspond to collinear $t$-sets of $\mathbb A_j$-siblings in $M$, and let $\mathcal{H}=\bigcup_{j=0}^2\mathcal{H}_j$.

We claim that for any three $t$-holes $C^\tau_0\in\mathcal{H}_0$, $C^\tau_1\in\mathcal{H}_1$, and $C^\tau_2\in\mathcal{H}_2$, the intersection $\conv(C^\tau_0)\cap\conv(C^\tau_1)\cap \conv(C^\tau_2)$ is either empty or consists of a single point $s^\tau\in C^\tau_0\cap C^\tau_1\cap C^\tau_2\subset M^\tau$.
We will refer to this as the \textit{no-colorful-triple-intersection property}; see Figure \ref{fig:no colorful triple}.

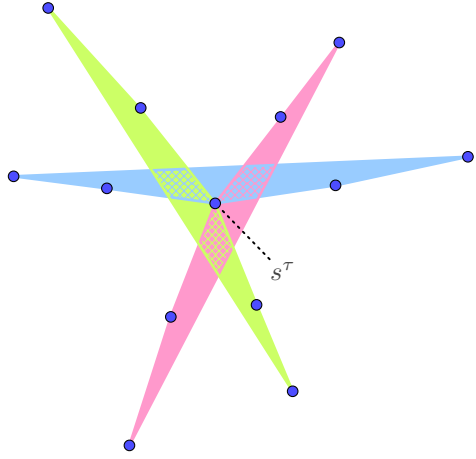
\begin{figure}
    \centering
    \begin{tikzpicture}[line cap=round,line join=round,>=triangle 45,x=1cm,y=1cm]
\clip(-3,-3.5) rectangle (3.5,3);

\fill[line width=0.5pt,color=ffzzcc,fill=ffzzcc,fill opacity=1]
  (-1.14,-3.22) -- (-0.59,-1.51) -- (0,0) -- (0.89,1.15) -- (1.65,2.14) -- cycle;

\fill[line width=0.5pt,color=zzccff,fill=zzccff,fill opacity=1]
  (0,0) -- (-1.44,0.2) -- (-2.68,0.36) -- (3.36,0.62) -- (1.6,0.24) -- cycle;

\fill[line width=0.5pt,color=ccffww,fill=ccffww,fill opacity=1]
  (-2.22,2.6) -- (-1.01,1.27) -- (0,0) -- (0.55,-1.35) -- (1.03,-2.5) -- cycle;

\fill[line width=0.5pt,color=ffzzcc,fill=ffzzcc,fill opacity=1]
  (0,0) -- (0.23,-0.58) -- (0.04,-0.95) -- (-0.22,-0.55) -- cycle;

\fill[line width=1pt,color=ccffww,fill=ccffww,pattern=north east lines,pattern color=ccffww]
  (0,0) -- (0.23,-0.58) -- (0.04,-0.95) -- (-0.22,-0.55) -- cycle;
\fill[line width=1pt,color=ccffww,fill=ccffww,pattern=north west lines,pattern color=ccffww]
  (0,0) -- (0.23,-0.58) -- (0.04,-0.95) -- (-0.22,-0.55) -- cycle;

\fill[line width=1pt,color=ffzzcc,fill=ffzzcc,pattern=north east lines,pattern color=ffzzcc]
  (0,0) -- (0.58,0.09) -- (0.8,0.51) -- (0.38,0.49) -- cycle;
\fill[line width=1pt,color=ffzzcc,fill=ffzzcc,pattern=north west lines,pattern color=ffzzcc]
  (0,0) -- (0.58,0.09) -- (0.8,0.51) -- (0.38,0.49) -- cycle;

\fill[line width=1pt,color=zzccff,fill=zzccff,pattern=north east lines,pattern color=zzccff]
  (-0.62,0.09) -- (0,0) -- (-0.37,0.46) -- (-0.84,0.44) -- cycle;
\fill[line width=1pt,color=zzccff,fill=zzccff,pattern=north west lines,pattern color=zzccff]
  (-0.62,0.09) -- (0,0) -- (-0.37,0.46) -- (-0.84,0.44) -- cycle;

\draw [line width=1pt,color=zzccff] (0,0)-- (-1.44,0.2) -- (-2.68,0.36) -- (3.36,0.62) -- (1.6,0.24) -- cycle;
\draw [line width=1pt,color=ffzzcc] (-1.14,-3.22) -- (-0.59,-1.51) -- (0,0) -- (0.89,1.15) -- (1.65,2.14) -- cycle;
\draw [line width=1pt,color=ccffww] (-2.22,2.6) -- (-1.01,1.27) -- (0,0) -- (0.55,-1.35) -- (1.03,-2.5) -- cycle;

\draw [fill=ududff] (0,0) circle (2pt);
\draw [fill=ududff] (-1.44,0.2) circle (2pt);
\draw [fill=ududff] (-2.68,0.36) circle (2pt);
\draw [fill=ududff] (1.6,0.24) circle (2pt);
\draw [fill=ududff] (3.36,0.62) circle (2pt);
\draw [fill=ududff] (-0.59,-1.51) circle (2pt);
\draw [fill=ududff] (-1.14,-3.22) circle (2pt);
\draw [fill=ududff] (0.87,1.15) circle (2pt);
\draw [fill=ududff] (1.65,2.14) circle (2pt);
\draw [fill=ududff] (-0.99,1.27) circle (2pt);
\draw [fill=ududff] (-2.22,2.6) circle (2pt);
\draw [fill=ududff] (0.55,-1.35) circle (2pt);
\draw [fill=ududff] (1.03,-2.50) circle (2pt);
\draw [line width=0.75pt,dotted] (0.1,-0.1)-- (0.75,-0.77);
\draw[color=uuuuuu] (0.9,-0.9) node {\small $s^\tau$};
\end{tikzpicture}

    \caption{The no-colorful-triple-intersection property ensures that the intersection of the convex hulls of any three $t$-holes $C^\tau_0\in\mathcal{H}_0$, $C^\tau_1\in\mathcal{H}_1$, and $C^\tau_2\in\mathcal{H}_2$ is either empty or one single point $s^\tau$.
    }
    \label{fig:no colorful triple}
\end{figure}

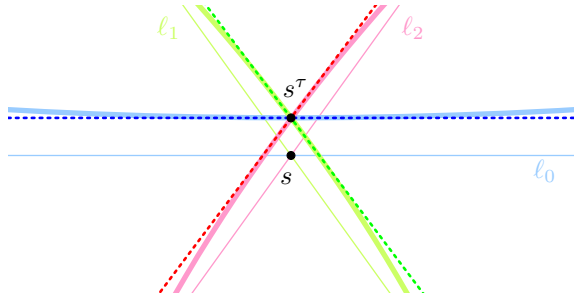
\begin{figure}
    \centering

\begin{tikzpicture}[line cap=round,line join=round,>=triangle 45,x=1.25cm,y=1cm]
\clip(-3,-1.82) rectangle (3,2);

\draw [line width=.5pt,color=ccffww,domain=-3.01:3.29] plot(\x,{(-0--0.87*\x)/-0.5});

\draw [line width=.5pt,color=zzccff,domain=-3.01:3.29] plot(\x,{(-0-0*\x)/0.81});

\draw [line width=.5pt,color=ffzzcc,domain=-3.01:3.29] plot(\x,{(-0-0.87*\x)/-0.5});

\draw [shift={(0,42.97)},line width=2pt,color=zzccff]
    plot[domain=4.56:4.87,variable=\t]
    ({1*42.47*cos(\t r)+0*42.47*sin(\t r)},
    {0*42.47*cos(\t r)+1*42.47*sin(\t r)});

\draw [shift={(20.63,-11.89)},line width=2pt,color=ffzzcc]
  plot[domain=2.47:2.76,variable=\t]
  ({1*24.06*cos(\t r)+0*24.06*sin(\t r)},
   {0*24.06*cos(\t r)+1*24.06*sin(\t r)});

\draw [shift={(-20.31,-11.73)},line width=2pt,color=ccffww]
  plot[domain=0.38:0.67,variable=\t]
  ({1*23.71*cos(\t r)+0*23.71*sin(\t r)},
   {0*23.71*cos(\t r)+1*23.71*sin(\t r)});

\draw [line width=1pt,dotted,color=qqqqff,domain=-3.01:3.29]
  plot(\x,{(21.24-0*\x)/42.47});

\draw [line width=1pt,dotted,color=qqffqq,domain=-3.01:3.29]
  plot(\x,{(6.11-20.31*\x)/12.23});

\draw [line width=1pt,dotted,color=ffqqqq,domain=-3.01:3.29]
  plot(\x,{(6.19--20.63*\x)/12.39});

\draw [fill=black] (0,0.5) circle (1.5pt);
\draw [fill=black] (0,0) circle (1.5pt);
\draw[color = black] (0.05,0.9) node {\small $s^\tau$};
\draw[color = black] (-0.05,-0.3) node {\small $s$};
\draw[color = zzccff] (2.7, -0.2) node {\small $\ell_0$};
\draw[color = ffzzcc] (1.3,1.7) node {\small $\ell_2$};
\draw[color = ccffww] (-1.3,1.7) node {\small $\ell_1$};
\end{tikzpicture}

    \caption{The point $s^\tau$ is the intersection point of three parabolas (drawn by thick): $\tau(\ell_0)$, $\tau(\ell_1)$ and $\tau(\ell_2)$. For each $j \in \{0,1,2\}$, the tangent line of $\tau(\ell_j)$ (drawn by a dotted line), is close to being parallel to $\ell_j$ (drawn by a thin line).
    }
    \label{fig:tangent lines}
\end{figure}

To prove the claim, first notice that $C^\tau_j\subset N(\ell_j,\beta)$
for some line $\ell_j\in L$ that makes an angle at most $\pi/20$ with $\omega^j(0,1)$
for all $j\in \{0,1,2\}$. Observation \ref{obs:grid} implies that $\ell_0\cap \ell_1\cap \ell_2$ is either empty or consists of a single point $s\in M$. The choice of $\beta>0$ implies that $\ell_0\cap \ell_1\cap \ell_2=\emptyset$ yields $\conv(C^\tau_0)\cap\conv(C^\tau_1)\cap \conv(C^\tau_2) =\emptyset$. Assume that the lines intersect at a point $s\in M$.
Then $s^\tau$ is a vertex of $C^\tau_j$ for all $j=0,1,2$, and
\[
    \conv(C^\tau_0)\cap\conv(C^\tau_1)\cap \conv(C^\tau_2) \subset
N(\ell_0,\beta)\cap N(\ell_1,\beta)\cap N(\ell_2,\beta)\subset
D(s,\alpha/3),
\]
In this case, $C^\tau_j$ is contained in a halfplane, bounded by a tangent line of the parabola $\tau(\ell_j)$ at $s^\tau\in \tau(\ell_j)$.
Moreover, the direction of the tangent line is almost $\omega^j(1,0)$; it makes an angle at most $\pi/10$ with $\omega^j(1,0)$. Indeed, by the choice of $\gamma$, it makes an angle at most $\pi/20$ with $\ell_j$, which in turn makes an angle at most $\pi/20$ with $\omega^j(1,0)$. It follows that the halfplane bounded by tangent lines to the parabola $\tau(\ell_j)$ at $s^\tau$ is contained in a (nonconvex) cone $E_j$ of apex angle $\pi/10+\pi+\pi/10=\frac{6\pi}{5}$; where $E_j=\omega^j(E_0)$ for $j=0,1,2$.
The intersection of the three cones, $E_0\cap E_1\cap E_2$, consists of the point $s^\tau$. Since $\conv(C^\tau_j)\subset E_j$ for $j=0,1,2$, then $\conv(C^\tau_0)\cap\conv(C^\tau_1)\cap \conv(C^\tau_2)$ also consists of the point $s^\tau$, where $s^\tau\in C^\tau_0\cap C^\tau_1\cap C^\tau_2\subset M^\tau$, as claimed. See Figure \ref{fig:tangent lines} for a sketch.

\paragraph{Breaker's move.}
Next, Breaker places a set $B$ of at most $(2-\varepsilon)n$ points in the plane, where $n=|M^\tau|$, and $M^\tau\cup B$ is in general position. We show that $M^\tau$ contains a $k$-hole with respect to $M^\tau\cup B$.
We distinguish between two types of points in $B$: let $B^1$ be the set of points in $B$ that lie in the disk $D(s,\alpha/3)$ for some $s\in M$; and let $B^2=B\setminus B^1$.

Every point $b\in B^2$ lies in some neighborhoods $N(\ell,\beta)$ for at most two lines $\ell\in L$ because of the choice of $\alpha$, Observation \ref{obs:grid}, and the fact that $\pi$ is generic. Therefore, every point $b\in B^2$ lies in the convex hull of at most two $t$-holes in $\mathcal{H}$. Let $\mathcal{H}^2$ be the set of $t$-holes $H\in \mathcal{H}$ such that $|\mathrm{conv}(H)\cap B^2|\geq 2$. Double counting yields $|\mathcal{H}^2|\leq |B^2|$.
For each $k$-hole $H\in \mathcal{H}^2$, create a point $b'\in \mathrm{conv}(H)$ that lies in the disk $D(s,\alpha/3)$ for some $s\in H$. Let $B^3$ be the set of all such points $b'$. Since  $|\mathcal{H}^2|\leq |B^2|$, then $|B^3|\leq |B^2|$ and $|B^1\cup B^3|\leq |B|\leq (2-\varepsilon)n$.

Finally, let $M^\tau(B)$ be the set of points $s^\tau \in M^\tau$, such that the disk $D(s,\alpha/3)$ contains at least two points of $B^1\cup B^3$. By construction, $|M^\tau(B)|\leq |B|/2\leq (1-\varepsilon/2)n$.
For every $j\in \{0,1,2\}$, let $M_j^\tau$ be the set of points $s^\tau\in M^\tau\setminus M^\tau(B)$ such that $(B^1\cup B^3)\cap D(s,\alpha/3)$ is disjoint from $\conv(C_j^\tau)$ for every $C_j^\tau\in \mathcal{H}_j$.
We claim that $M^\tau\setminus M^\tau(B) =\bigcup_{j=0}^2 M_j^\tau$.
Indeed, consider a point $s^\tau\in M^\tau\setminus M^\tau(B)$.
If $(B^1\cup B^3)\cap D(s,\alpha/3)=\emptyset$, then $s^\tau \in M_j^\tau$ for all $j\in \{0,1,2\}$. Otherwise, there is a unqiue point $b\in (B^1 \cup B^3)\cap  D(s, \alpha/3)$,
and by the no-colorful-triple-intersection property, there exists a $j\in \{0,1,2\}$ such that $b\notin \conv(C_j^\tau)$ for any $C_j^\tau \in \mathcal{H}_j$.
By averaging, there is a $j\in \{0,1,2\}$ such that $|M^\tau_j|\ge \varepsilon n/6$. Fix such a $j$.

Let $R=\{x\in [t]^d: \varphi^\tau(x)\in M^\tau_j\}$.
Partition $R$ into groups of $\mathbb A_j$-syblings.
Each such group is isomorphic to $[t]^{d/3}=[t]^{d_0}$, and by averaging, one group $R'$ has density at least $\varepsilon/6=\delta$.
The Density Hales-Jewett Theorem implies that the set $R'$ contains a collinear $t$-set; let $T$ be such a set.
Then $H=\varphi^\tau(T)$ is a $t$-hole of $\mathbb A_j$-siblings with respect to $M^\tau$.
While $H$ is not necessarily a $t$-hole with respect to $M^\tau\cup B$, we will prove that it can contain at most one point of $B$.

Suppose that there is some point $b\in B$ in $\mathrm{conv}(H)$.
Since $H\subset M^\tau_j$, then $b$ cannot be in a disk $D(s,\alpha/3)$ for any $s\in M$.
This implies $b\notin B^1$, hence $b\in B^2$. Since $\mathrm{conv}(H)$ contains no points of $B^3$ either, then $\mathrm{conv}(H)$ contains at most one point of $B$.
As a convex set with $t$ vertices and at most one point of $B$ inside it always contains a $(t+1)/2$-hole and $(t+1)/2=k$, we are done.
However, we can then find a subset $H'\subset H$ of size $k<(t+1)/2$ such that $\mathrm{conv}(H)$ does not contain any point of $B$. Specifically, the $t$ points in $H$ have a linear order along a parabola, where $t=2k-1$ yields $(t+1)/2=k$. The first $k$ and the last $k$ points of $H$ in this order determine interior-disjoint convex hulls, so we can take $H'$ as the first or last $k$ points in $H$ such that $b\notin \conv(H')$. Then $H'$ is a $k$-hole with respect to $M^\tau\cup B$, as required.
\end{proof}

\section{Conclusions and Open Problems}
\label{sec:con}

We have considered several Maker-Breaker games that arose from the Erd\H{o}s-Szekeres problem in combinatorial geometry. Firstly, we considered the monochromatic game. In this game, Breaker's points can contribute to the $k$-hole that Maker is trying to obtain. We have shown that if both players play with unit speed (i.e., they each play one point per round), Maker always has a winning strategy. Furthermore, even if Breaker is allowed to play with increased (but constant) speed, Maker still wins.
Due to a result of Conlon and Lim~\cite{ConlonL23}, we know that there exists a sufficiently fast growing function $f:\mathbb{N}\to\mathbb{N}$ such that if Breaker plays $f(i)$ points in round $i$, she wins; but determining the minimum growth rate of such a function $f$
is an intriguing open problem.

The outcome of the game is less somber for Breaker whenever we consider the bichromatic version of the Maker-Breaker game. In this version, Breaker's points can only destroy $k$-holes and cannot contribute to them. When both players have low speed, Maker still wins. This is the case whenever the game is played with bias $1:s$ for any constant $s < 2$ and $k \geq 3$. But if Breaker's speed is high enough, then she is able to win. We showed that for $k \geq 8$ and bias $1:12$ for Maker and Breaker, the latter has a winning strategy. This raises an interesting problem: What is the minimum speed $s(k)$ required for Breaker to win the $1:s(k)$ game? We know that $2\leq s(k)\leq 12$ for $k\geq 8$, and $2\leq s(k)\leq 6$ for $k\geq 17$. However, we do not even know whether Breaker can win with any constant speed $s(k)>1$ when $3<k<8$.
Clearly, $s(k)$ is nonincreasing in $k$, but it remains an open problem to determine $\lim_{k\to \infty}s(k)$, that is, the limit of speeds that allow Breaker to prevent $k$-holes for sufficiently large values of $k$.
Perhaps a first step to address these problems is the observation that Breaker wins the one-round game whenever she has speed two. Whether that also holds for multiple round games is still open, though. The geometric arguments that support Maker's winning strategy using $k$-strips with bias $1:s(k)$, for $s(k) <2$, no longer work for $s(k) \geq 2$. Showing that Maker can still win when Breaker is playing with speed higher than two seems to require novel ideas and a new strategy. In contrast, in the one-round game, when the respective speeds for Maker and Breaker are 1 and $2-\varepsilon$, Maker has a winning strategy based on the Density Hales-Jewett Theorem.

Lastly, another interesting research direction is to determine how efficient the above winning strategies are for Maker. These proposed strategies give an exponential upper bound on the number of rounds required to form a $k$-hole. It is still unclear whether there exist strategies that allow Maker to win in fewer rounds. Proving the optimality of a strategy, and thus proving the exact number of points required, seems to be an even more challenging open problem.

\section{Acknowledgments}
Collaboration on this project started at the \emph{Novi Sad Workshop on Foundations of Computer Science} (\emph{NSFOCS}), held June 22-26, 2024, in Novi Sad, Serbia. The authors thank Mirjana Mikala\v{c}ki, Milo\v{s} Stojakovi\'{c}, Marko Savi\'{c} and Jelena Stratijev for their hospitality and for organizing this successful workshop.


\end{document}